\documentclass[12pt]{amsart}
\usepackage[a4paper,margin=0.9in]{geometry}

\usepackage[colorlinks,citecolor=magenta,linkcolor=black]{hyperref}
\pdfpagewidth=\paperwidth \pdfpageheight=\paperheight
\usepackage{amsfonts,amssymb,amsthm,amsmath,eucal,tabu,url}
\usepackage{pgf}
\usepackage{array}
\usepackage{tikz-cd}
\usepackage{pstricks}
\usepackage{pstricks-add}
\usepackage{pgf,tikz}
\usetikzlibrary{automata}
\usetikzlibrary{arrows}
\usepackage{indentfirst}
\pagestyle{myheadings}
\usepackage{tabularx} 
\usepackage[style=alphabetic,doi=false,isbn=false,url=false,backend=biber,giveninits=true,maxcitenames=3,maxbibnames=8,hyperref]{biblatex}
\addbibresource{bibliography.bib}



\theoremstyle{plain}
\newtheorem{thm}{Theorem}[section]
\newtheorem{theorem}[thm]{Theorem}

\newtheorem{lemma}[thm]{Lemma}
\newtheorem{proposition}[thm]{Proposition}

\newtheorem{conjecture}[thm]{Conjecture}

\theoremstyle{definition}
\newtheorem{definition}[thm]{Definition}
\newtheorem{remark}[thm]{Remark}
\newtheorem{example}[thm]{Example}

\newtheorem{thevarthm}[thm]{\varthmname}

\newenvironment{varthm*}[1]{\trivlist\item[]{\bf #1.}\it}{\endtrivlist}


\def\keywordname{{\bfseries Keywords}}%
\def\keywords#1{\par\addvspace\medskipamount{\rightskip=0pt plus1cm
\def\and{\ifhmode\unskip\nobreak\fi\ $\cdot$
}\noindent\keywordname\enspace\ignorespaces#1\par}}
\def\subclassname{{\bfseries Mathematics Subject Classification
(2020)}\enspace}
\def\subclass#1{\par\addvspace\medskipamount{\rightskip=0pt plus1cm
\def\and{\ifhmode\unskip\nobreak\fi\ $\cdot$
}\noindent\subclassname\ignorespaces#1\par}}

\title[Numerical Terao's Conjecture and Ziegler pairs]{On the Numerical Terao's Conjecture and Ziegler pairs for line arrangements}

\author{Lukas~K\"uhne}
\address{Lukas K\"uhne, Universit\"at Bielefeld, Fakult\"at f\"ur Mathematik, Bielefeld, Germany}
\email{lkuehne@math.uni-bielefeld.de}

\author{Dante Luber}
\address{Dante Luber, Berlin Mathematical School at Technische Universit\"at Berlin, Straße des 17. Juni 45, D-10623 Berlin, Germany.}
\email{dantelubermath@gmail.com}

\author{Piotr Pokora}
\address{Piotr Pokora, Department of Mathematics, University of the National Education Commission Krakow, Podchor\c a\.zych 2, PL-30-084 Krak\'ow, Poland.}
\email{piotrpkr@gmail.com, piotr.pokora@uken.krakow.pl}

\date{\today}
\begin{document}
\begin{abstract}
In this paper we present a smallest possible counterexample to the Numerical Terao's Conjecture in the class of line arrangements in the complex projective plane. Our example consists of a pair of two arrangements with $13$ lines.
Moreover, we use the newly discovered singular matroid realization spaces to construct new examples of pairs of line arrangements having the same underlying matroid but different free resolutions of the Milnor algebras.
Such rare arrangements are called Ziegler pairs in the literature.\\
\textbf{Keywords}: line arrangements, moduli spaces, freeness.
\subclass{14N20, 13D02, 05B35, 52C35}
\end{abstract}
\maketitle
\section{Introduction}\label{sec:intro}
In the present paper we study algebraic and combinatorial properties of line arrangements that are called \emph{$m$-syzygy}. Roughly speaking, these are line arrangements in the complex plane that are characterized by the homological properties of the free resolution of the Milnor algebra that we can associate with these arrangements.
Such arrangements can equivalently be characterized via the free resolutions of their module of logarithmic derivations.
A lot of effort has been invested towards understanding some fundamental properties of such $m$-syzygy arrangements. In this paper we will focus on the following natural general problem.
\begin{center}
\textbf{Problem}: Describe homological properties of $m$-syzygy line arrangements that are encoded by the (weak) combinatorics.
\end{center}
More precisely, the above problem is motivated by a landmark paper due to Ziegler \cite{Ziegler} which describes a construction of two arrangements of lines that have isomorphic intersection lattices, but the associated Milnor algebras have different minimal free resolutions. Recall that by the intersection lattice $L(\mathcal{L})$ of a given line arrangement $\mathcal{L} \subset \mathbb{P}^{2}_{\mathbb{F}}$ with $\mathbb{F}$ being any field we mean the set of all flats, i.e., non-empty intersections of (sub)families of lines in $\mathcal{L}$ with the order defined by the reverse inclusion.
This is the lattice of flats of the underlying \emph{matroid}.
In light of our problem, it means that the shape of the minimal free resolution of the Milnor algebra associated with Ziegler's line arrangements is not determined by the intersection lattice. 

Another path of studies was initiated by Saito and Terao in 1980's, when they introduced the notion of free hyperplane arrangements. We say that a line arrangement is \emph{free} if the associated derivation module to this arrangement is free. 
In that context, we have the following intriguing conjecture.
\begin{conjecture}[Terao]
Let $\mathbb{F}$ be a fixed field and let $\mathcal{L}_{1}, \mathcal{L}_{2} \subset \mathbb{P}^{2}_{\mathbb{F}}$ be two line arrangements. Assume that $\mathcal{L}_{1}$ and $\mathcal{L}_{2}$ have isomorphic intersection lattices. Then $\mathcal{L}_{1}$ is free if and only if~$\mathcal{L}_{2}$ is free.
\end{conjecture}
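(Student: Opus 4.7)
This statement is Terao's freeness conjecture, one of the central open problems in the theory of hyperplane arrangements, so what I can offer is not a genuine proof but a roadmap of the standard attack routes together with the reasons none has succeeded. The plan is to fix two line arrangements $\mathcal{L}_1, \mathcal{L}_2 \subset \mathbb{P}^2_{\mathbb{F}}$ with $L(\mathcal{L}_1) \cong L(\mathcal{L}_2)$, assume $\mathcal{L}_1$ is free with exponents $(1,a,b)$, and try to transport a basis of the derivation module to $\mathcal{L}_2$ using only lattice data. As a first consistency check, Terao's factorization theorem gives that the characteristic polynomial of $\mathcal{L}_1$ factors as $(t-1)(t-a)(t-b)$, and since the characteristic polynomial is a matroid invariant the numerics $(1,a,b)$ are automatically forced on the $\mathcal{L}_2$ side; the real task is to upgrade this numerical coincidence to actual freeness.

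The first serious approach I would try is inductive freeness via the Saito--Terao addition--deletion theorem. If one can locate a line $\ell \in \mathcal{L}_1$ such that the triple $(\mathcal{L}_1, \mathcal{L}_1 \setminus \{\ell\}, \mathcal{L}_1 \cap \ell)$ satisfies the numerical hypotheses of addition--deletion, then deleting the combinatorially corresponding line $\ell' \in \mathcal{L}_2$ and inducting on $|\mathcal{L}|$ propagates freeness from $\mathcal{L}_1$ to $\mathcal{L}_2$. This strategy succeeds for inductively free arrangements and, more generally, for recursively free ones, but it breaks down in general because not every free arrangement admits such a distinguished line, and even when it does, the reduction may leave an unavoidable base case which is itself free only for geometric, non-combinatorial reasons.

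A second route uses Yoshinaga's criterion, which in the plane says that $\mathcal{L}$ is free if and only if a certain multi-arrangement obtained by restricting to one line is free with matching exponents. The multiplicities involved are combinatorial, since they record incidences at intersection points, so the problem reduces one dimension down to freeness of a multi-arrangement on $\mathbb{P}^1$. The trouble is that deciding freeness of the restricted multi-arrangement ultimately requires rank conditions on a matrix whose entries depend on the specific realization rather than on the matroid alone, which is precisely where the combinatorial grip is lost.

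The main obstacle, and the reason the conjecture has resisted progress since the 1980s, is sharpened by the very results of this paper: the free resolution of the Milnor algebra, and hence the graded Betti numbers attached to the derivation module, already fail to be matroid invariants once one has $13$ or more lines, as our counterexample to the Numerical Terao Conjecture demonstrates, and the Ziegler pairs exhibited here produce further instances of this failure even within a single realization space. Any proof of the original Terao conjecture must therefore exploit a combinatorial quantity strictly finer than the numerical invariants of the resolution yet strictly coarser than the realization itself; no such invariant is presently available. I would therefore expect any of the inductive or restriction strategies above to stall, and I regard the statement as genuinely open.
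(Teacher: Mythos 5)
You are right not to attempt a proof: the statement is Terao's conjecture, which the paper states only as a conjecture and explicitly describes as wide open (known for line arrangements with up to $14$ lines by exhaustive verification, not by a general argument), so there is no proof in the paper to compare against and your assessment that it is genuinely open is the correct one. One small correction to your closing paragraph: the paper's $13$-line counterexample refutes the \emph{Numerical} Terao Conjecture, i.e.\ it concerns the weak-combinatorics $(d;t_2,\dots)$ rather than the intersection lattice, so it is not evidence that the resolution fails to be a matroid invariant; the relevant evidence for that is the Ziegler pairs (already on $9$ lines in Ziegler's original example, and on $12$ lines in Section 5 of this paper), which show that ${\rm mdr}$ and the Betti numbers of the Milnor algebra are not determined by the lattice, while leaving the freeness question itself untouched.
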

In other words, Terao's conjecture asserts that the freeness of a line arrangement over a fixed field is determined by the underlying intersection lattice, i.e., the underlying matroid.
Terao's conjecture is wide open, even for line arrangements, although we know that it holds for arrangements with up to $14$ lines \cite{BarKuh}. 

However, it is well-known that not all algebraic properties of these modules are combinatorial in general. The present manuscript provides further evidence for this observation.

First, in Section \ref{sec:numerical teroa}, we attack a weaker problem. Instead of considering the strong combinatorics of line arrangements determined by intersection lattices, one can focus on the numerical data determined by the intersection points. 
\begin{definition}
Let $\mathcal{L} \subset \mathbb{P}^{2}_{\mathbb{F}}$ be an arrangement of $d$ lines. We define the \emph{weak-combinatorics} of $\mathcal{L}$, denoted by $W(\mathcal{L})$, as a vector of the form
$$W(\mathcal{L}) = (d;t_{2}, \ldots, t_{d}),$$
where $t_{i}$ denotes the number of $i$-fold points, i.e., points where exactly $d$ lines from the arrangement meet. 
We use the convention that we truncate the vector $W(\mathcal{L})$ by removing data $t_{i} = 0$ for $i > m(\mathcal{L})$, where $m(\mathcal{L})$ denotes the maximal multiplicity of intersection points in $\mathcal{L}$. 
\end{definition}
For a given line arrangement $\mathcal{L}$, it is clear that $W(\mathcal{L})$ contains less information than the intersection lattice $L(\mathcal{L})$, but it is still interesting to ask whether the freeness can be potentially determined by the weak-combinatorics.

\begin{conjecture}[Numerical Terao's Conjecture]\label{conj:numerical_terao}
Let $\mathcal{L}_{1}, \mathcal{L}_{2} \subset \mathbb{P}^{2}_{\mathbb{F}}$ be two line arrangements such that
$W(\mathcal{L}_{1}) = W(\mathcal{L}_{2})$. Then $\mathcal{L}_{1}$ is free if and only if $\mathcal{L}_{2}$ is free.
\end{conjecture}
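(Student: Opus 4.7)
The plan is to \emph{disprove} the Numerical Terao's Conjecture by exhibiting an explicit pair $(\mathcal{L}_1, \mathcal{L}_2)$ of complex line arrangements with $W(\mathcal{L}_1)=W(\mathcal{L}_2)$ such that exactly one of them is free. Since the strong Terao conjecture is known to hold for arrangements of at most $14$ lines, the intersection lattices $L(\mathcal{L}_1)$ and $L(\mathcal{L}_2)$ in any such pair must necessarily be \emph{non-isomorphic}; the whole game is therefore to find two non-isomorphic rank-$3$ matroids on the same ground set whose flat-multiplicity vectors coincide and whose complex realizations differ in freeness.

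The concrete strategy is an incremental census. For each $d$ in increasing order, I would enumerate the simple rank-$3$ matroids on $d$ elements, discard those with no complex realization, and sort the rest into buckets labeled by the weak-combinatorics $(d;t_2,t_3,\dots)$. Whenever a bucket contains at least two matroids, one picks a realization for each (for instance, a generic point of each irreducible component of the moduli space), forms the defining polynomial $f=\prod_{\ell \in \mathcal{L}} \ell$, and computes the minimal free resolution of the module of logarithmic derivations using a computer algebra system. Freeness is then read off via Saito's criterion: projective dimension one together with matching total degree. The search terminates the first time a bucket produces one free and one non-free arrangement, yielding the sought pair.

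Two obstacles dominate. The first is \emph{realizability} and its subtleties: producing a complex realization of a given matroid reduces to solving a polynomial system in slope or Pl\"ucker coordinates, and the moduli space may be reducible or have singular strata, in which case freeness can depend on which component one samples. The algorithm therefore has to probe more than one representative per matroid in order not to miss a potential counterexample; this is precisely where the singular matroid realization spaces mentioned in the introduction come into play.

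The second, and harder, obstacle is certifying \emph{minimality}: to claim that $13$ lines is the smallest possible, one must verify that in \emph{every} weak-combinatorics bucket and for every $d\le 12$ all realizations of all member matroids share the same freeness status. This step is computationally demanding and can only be discharged by an exhaustive verification resting on a reliable enumeration of small rank-$3$ matroids together with uniform freeness tests. Once both obstacles are overcome, displaying a single explicit pair with the correct numerical invariants and ruling out smaller pairs completes the disproof of the conjecture.
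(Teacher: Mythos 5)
Your overall strategy---disprove the conjecture by a computational census over rank-$3$ matroids bucketed by weak-combinatorics, exhibit an explicit $13$-line pair, and certify minimality by exhausting smaller $d$---is exactly the shape of the paper's argument, and your observation that the two members of any counterexample pair must have non-isomorphic lattices (because Terao's conjecture is known up to $14$ lines) is correct. However, as written your plan has a feasibility gap that the paper closes with a specific idea you are missing. You propose to ``enumerate the simple rank-$3$ matroids on $d$ elements'' for each $d$ up to $13$ (and the paper even pushes to $14$). This enumeration is hopeless at that size: the number of simple rank-$3$ matroids explodes far beyond anything that can be listed, realized, and tested. The paper's key reduction is to observe that if $(\mathcal{A},\mathcal{B})$ is a counterexample with $\mathcal{A}$ free, then the shared weak-combinatorics determines the shared characteristic polynomial (Remark~\ref{rem:char_poly}), and Terao's factorization theorem forces that polynomial to split over the integers. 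Hence one only needs to search the much smaller, precomputed database of rank-$3$ matroids on at most $14$ elements with integrally split characteristic polynomial~\cite{Bar21}. Without this filter your census does not terminate in practice, and in particular your minimality claim for $d\le 12$ (and the paper's additional claim for $d=14$) cannot be discharged.

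A second, smaller omission: even after the filter, deciding freeness realization-by-realization via resolutions of the derivation module is costly and, as you note, delicate when moduli spaces are reducible. The paper avoids most of these computations with two combinatorial certificates: divisional freeness~\cite{Abe16}, which guarantees that \emph{all} realizations of a matroid are free, and Lemma~\ref{lem:nonfree_arr}, which uses the Ziegler restriction at a point of multiplicity $m$ with $2m\ge n+1$ to show that all realizations are non-free whenever $m-1$ fails to be a root of the reduced characteristic polynomial. These two tools dispose of almost every bucket, leaving only a handful of cases (the $(13;16,6,4,2)$ family among them) where explicit realizations such as those in Examples~\ref{ex:ntc1} and~\ref{ex:ntc2} must be computed. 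Your proposal would eventually find the same pair if the enumeration were possible, so the gap is not conceptual about what a counterexample looks like, but about the reduction that makes the exhaustive verification---and hence the minimality statement---actually provable.
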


The Numerical Terao's Conjecture was posed, and shown to be false, in \cite{Mar}. The authors construct two complex projective line arrangements with the same weak-combinatorics, but one is free and the other is not \cite[Theorem 6.2]{Mar}.
\begin{theorem}[Marchesi-Vall\'es]
\label{pair}
There exists a pair of line arrangements in the complex projective plane, each of which has the weak-combinatorics
$$(d_{1};t_{2},t_{3},t_{4},t_{5},t_{6}) = (15; 24 ,12, 0, 0, 3),$$
such that one is free and the second is not free.
\end{theorem}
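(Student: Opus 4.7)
The plan is to prove this existence statement constructively by exhibiting two explicit line arrangements $\mathcal{L}_1,\mathcal{L}_2 \subset \mathbb{P}^{2}_{\mathbb{C}}$, both with $15$ lines and weak-combinatorics $(15;24,12,0,0,3)$, and then verifying that exactly one of them is free. As a sanity check on the combinatorial vector, the pair-count identity
$3\binom{6}{2}+12\binom{3}{2}+24\binom{2}{2}=45+36+24=105=\binom{15}{2}$
is satisfied, so the data is consistent and a realization is not obstructed for trivial reasons. A natural first step is to fix three sextuple points (say, the three coordinate points of $\mathbb{P}^2$), take four additional lines through each of these three points, and then add three further transversal lines meant to supply the prescribed triple points. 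The difference between $\mathcal{L}_1$ and $\mathcal{L}_2$ should come from a subtle variation in the position of the auxiliary lines that preserves the vector $W(\mathcal{L})$ but alters the intersection lattice or the realization stratum.

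To certify that the free member of the pair is indeed free, I would apply Saito's criterion: produce two logarithmic derivations $\theta_1,\theta_2\in D_0(\mathcal{L})$ of degrees $d_1,d_2$ with $d_1+d_2=14$ whose Saito matrix (together with the Euler derivation) has determinant equal to a nonzero scalar multiple of the defining polynomial. The candidate exponents $(d_1,d_2)$ can be read off combinatorially from the formula $d_1 d_2=\sum_{p}\binom{m_p}{2}-(d-1)$, which for the vector $(15;24,12,0,0,3)$ gives $d_1 d_2=105-14=91$; the splitting $14=d_1+d_2$ with $d_1 d_2=91$ would fix the pair $(d_1,d_2)=(7,7)$ as the only candidate. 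One then searches for such derivations in degree $7$ by linear algebra on the Jacobian syzygies.

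To show the second arrangement is not free, the cleanest route is to compute the minimal free resolution of the Milnor algebra and exhibit a third minimal syzygy generator beyond the two required in the free case; equivalently, one shows that $D_0(\mathcal{L}_2)$ admits more than two minimal generators. A slicker obstruction, when available, uses Yoshinaga-type numerical tests: produce a logarithmic derivation of degree strictly smaller than the expected $d_1=7$, forcing any potential free decomposition to fail because $d_1+d_2=14$ cannot be matched with the other invariants.

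The main obstacle is not the verification—once explicit equations are in hand, all checks reduce to polynomial manipulations, most naturally carried out in \textsc{Macaulay2} or \textsc{Singular}—but the \emph{discovery} of the pair. Concretely, one must identify a weak-combinatorics vector whose realization space decomposes (or has a distinguished special locus) in such a way that freeness jumps between components. This requires understanding the moduli of arrangements with three prescribed sextuple points, locating a special configuration (e.g., three auxiliary lines becoming concurrent or the three sextuple points acquiring a collinearity) on which the graded Betti numbers of $D_0(\mathcal{L})$ strictly drop, and then cross-checking that the weak-combinatorics remains constant across the degeneration. This creative input—finding the right incidence pattern admitting two essentially different realizations—is the heart of Marchesi--Vall\'es's argument.
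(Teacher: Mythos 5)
There is a genuine gap. The statement you are asked to prove is an existence claim, and the paper itself does not reprove it: it is quoted from Marchesi--Vall\'es \cite[Theorem~6.2]{Mar}, whose proof consists of an explicit construction with so-called triangular arrangements (the three sides of a triangle together with pencils of lines through its vertices) plus a verification of freeness/non-freeness. Your outline correctly identifies this shape of construction (three $6$-fold points, four extra lines through each, plus the triangle), and correctly identifies Saito's criterion and the free resolution of the Milnor algebra as the verification tools. But an outline of where to look is not a proof of existence: you never exhibit the two arrangements, never verify that the weak-combinatorics is $(15;24,12,0,0,3)$ for both, and never carry out either freeness check. You acknowledge this yourself in the last paragraph, so as it stands the proposal defers exactly the step that constitutes the proof.

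There is also a concrete numerical error in the one computation you do carry out. The product of the exponents of a free line arrangement is governed by Terao's factorization: writing $\chi_0(\mathcal{L},t)=(t-d_1)(t-d_2)$ and comparing with the characteristic polynomial of Remark~\ref{rem:char_poly} gives $d_1+d_2=d-1$ and $d_1+d_2+d_1d_2=\sum_{r\ge 2}(r-1)t_r$. For $(15;24,12,0,0,3)$ this yields $\sum_{r\ge 2}(r-1)t_r=24+24+15=63$, hence $d_1d_2=63-14=49$ and $(d_1,d_2)=(7,7)$. Your formula $d_1d_2=\sum_p\binom{m_p}{2}-(d-1)=105-14=91$ is not correct (the pair count $\sum_p\binom{m_p}{2}$ is neither $\sum_r(r-1)t_r$ nor the total Tjurina number $\sum_r(r-1)^2t_r=147$), and indeed $d_1+d_2=14$ with $d_1d_2=91$ has no solution at all, let alone $(7,7)$. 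The conclusion $(7,7)$ you state is the right one, but it does not follow from the identity you invoke; you should replace that step by the characteristic-polynomial computation above (or by $d_1d_2=(d-1)^2-\tau$ with $\tau=147$).
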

This counterexample uses the so-called triangular line arrangements, which can be seen as a modification of the full monomial arrangements. 

The first main result in this paper provides a minimal (in the sense of the number of lines) counterexample to the Numerical Terao's Conjecture.
\begin{theorem}
\label{thm:A}
Over the complex numbers, there are several counterexamples to the Numerical Terao's Conjecture for arrangements with $13$ lines.  In particular, there exists a pair of two line arrangements such that each arrangement has the weak-combinatorics
\[(d;t_{2}, t_{3}, t_{4}, t_{5}) = (13; 16, 6, 4, 2),\]
with the property that one arrangement is free and the second one is not.
Moreover, there is no counterexample to the Numerical Terao Conjecture for arrangements with $d\le 12$ lines or $d=14$ lines.
\end{theorem}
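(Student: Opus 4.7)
The plan splits naturally into an existence part (producing the $13$-line pair) and a non-existence part (ruling out smaller counterexamples and $d=14$).

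For the existence part, I would start from the observation that counterexamples to the Numerical Terao's Conjecture must share weak-combinatorics but differ on some finer invariant that detects freeness; thus a promising place to look is a weak-combinatorics vector that is realized by more than one matroid, with different realization spaces. I would enumerate the weak-combinatorics vectors $(13;t_2,t_3,t_4,t_5)$ that are compatible with the combinatorial identity $\binom{d}{2}=\sum_k \binom{k}{2} t_k$, list the simple matroids on $13$ points realizing each such vector (using the database of small matroids), and for each pair of distinct matroids with the same $W$ check whether one admits a free realization over $\mathbb{C}$ while the other does not. The advertised vector $(13;16,6,4,2)$ should appear as the minimal instance; I would then write down explicit defining polynomials for the two line arrangements, likely coming from the singular matroid realization spaces mentioned in the abstract (so the two matroids may actually have irreducible components of their realization spaces that are themselves distinguishable algebraically).

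Once the two arrangements $\mathcal{L}_1,\mathcal{L}_2$ are in hand, the verification is a direct computation: one computes the Milnor algebra $S/J_f$ or equivalently the module of logarithmic derivations $D(\mathcal{L}_i)$ and its minimal free resolution in a computer algebra system. A direct Saito-criterion check on the candidate free arrangement, together with a computation of the first graded piece $\mathrm{ar}(\mathcal{L}_i)_1$ or a splitting-type computation for the associated sheaf, then separates the two: one resolution has length one (the arrangement is free), the other has projective dimension two. Since both arrangements share the same $W$ by construction, this already establishes the counterexample part of the theorem.

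For the non-existence part, my plan is to leverage the work of Barakat--K\"uhne \cite{BarKuh}, which verifies Terao's conjecture up to $d=14$ by an exhaustive enumeration of the matroid realization spaces of all simple rank-$3$ matroids on at most $14$ elements. Numerical Terao is a \emph{strictly coarser} statement than Terao in the following sense: to refute it for a given $d$ it suffices to exhibit two matroids $M_1,M_2$ with the same $f$-vector $(t_2,\ldots,t_d)$ such that $M_1$ admits a free realization over $\mathbb{C}$ and $M_2$ admits a non-free realization over $\mathbb{C}$. Hence I would bucket the small rank-$3$ simple matroids on $n\le 14$ elements by their $W$-vector, and for each bucket check whether the set of \textit{free matroids in that bucket} coincides with the set of \textit{all matroids in the bucket}, using the freeness database produced in \cite{BarKuh}. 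The conclusion that no counterexample exists for $d\le 12$ or $d=14$ is then a matter of reporting the outcome of this bucket-by-bucket comparison; the case $d=13$ is the single value where a non-trivial bucket arises, and the arrangement exhibited above sits inside it.

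The main obstacle, and the step that needs the most care, is \emph{constructing} the two $13$-line arrangements explicitly and doing so over $\mathbb{C}$ rather than over a number field that happens not to embed. This is where the singular realization space input is essential: one has to pick two matroids whose realization spaces, although sharing the same $W$, lie in different irreducible components or strata whose generic points see genuinely different syzygy behaviour. Verifying that the chosen representatives are smooth points of their realization spaces (so that specialization does not collapse the two arrangements into combinatorially identical ones) and that the freeness computation is stable under the choice of parameters is the delicate technical step. Once this is handled, the rest of the proof reduces to routine commutative-algebra computations and to quoting the exhaustive enumeration underlying \cite{BarKuh}.
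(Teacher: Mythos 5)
Your overall skeleton (an explicit $13$-line pair verified by computing free resolutions, plus an exhaustive database-driven check for the remaining degrees) matches the paper's strategy, but two points need correction.

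First, the non-existence part as you state it is not feasible: you propose to ``bucket the small rank-$3$ simple matroids on $n\le 14$ elements by their $W$-vector,'' but the class of all rank-$3$ simple matroids on up to $14$ elements is far too large to enumerate. The step that makes the paper's argument work is the observation that the weak-combinatorics determines the characteristic polynomial (Remark~\ref{rem:char_poly}) and that, by Terao's factorization theorem, a free arrangement has a characteristic polynomial splitting over $\mathbb{Z}$; hence in any counterexample pair \emph{both} matroids have integrally splitting characteristic polynomials and therefore both appear in the much smaller database of~\cite{Bar21}. You never articulate this reduction, and without it the restriction to a precomputed database is unjustified (a priori the non-free partner could be a matroid outside it). Relatedly, your bucket criterion is off: the correct test is that no bucket contains both a matroid with a free realization and one with a non-free realization, not that ``the free matroids coincide with all matroids in the bucket'' (a bucket consisting entirely of non-free matroids fails your test but yields no counterexample). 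Finally, the database does not hand you freeness of realizations outright: the paper still has to decide freeness for the candidates that are not divisionally free, which it does either combinatorially via Lemma~\ref{lem:nonfree_arr} (an $m$-fold point with $2m\ge n+1$ forces $m-1$ to be a root of $\chi_0$) or by direct computation on explicit realizations; this case-by-case analysis for $n=10,\dots,14$ is the bulk of the actual proof and is absent from your plan.

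Second, your existence part misattributes the construction: the singular matroid realization spaces enter the paper only in the Ziegler-pair constructions of Section~\ref{sec: ziegler pairs}. The NTC counterexample consists of two explicit arrangements (Examples~\ref{ex:ntc1} and~\ref{ex:ntc2}), defined over $\mathbb{Q}(i)$ and $\mathbb{Q}(\sqrt{5})$ respectively, realizing two \emph{different} non-divisionally-free matroids with the same weak-combinatorics $(13;16,6,4,2)$; one is free with exponents $(6,6)$ and the other is plus-one-generated with exponents $(5,8,8)$, seen as the deletion of a free arrangement. Your concern about smooth points of realization spaces is therefore not relevant here, though the verification-by-resolution plan itself is sound.
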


In Section \ref{sec:weak ziegler} we focus our attention on new examples of \emph{weak Ziegler pairs}. This notion has been recently introduced in \cite{Pokora1} and, roughly speaking, by a weak Ziegler pair we mean two line arrangements having the same weak-combinatorics, but they have different minimal degrees of non-trivial Jacobian relations. It is thous a weakening of pairs violating the Numerical Terao's conjecture. Here we construct a weak Ziegler pair in the class of line arrangements with only double and triple intersection points.

\begin{theorem}[{see Theorem \ref{WZ}}]
There exists a pair of (non-isomorphic) real arrangements of $12$ lines with $19$ triple and $9$ double intersection points such that they have different minimal degrees of Jacobian relations.
\end{theorem}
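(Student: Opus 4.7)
The plan is to exhibit two concrete real line arrangements $\mathcal{L}_1,\mathcal{L}_2 \subset \mathbb{P}^2_{\mathbb{R}}$, each consisting of $12$ lines with weak-combinatorics $(12;9,19)$, verify that they are non-isomorphic at the level of the intersection lattice, and certify that the minimal degrees of non-trivial Jacobian relations of their defining polynomials disagree. The numerical data is at least arithmetically consistent, since $\binom{12}{2}=66=3\cdot 19+1\cdot 9$, so there is no obstruction at the level of counting pairs of lines.

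To find candidates, I would scan classical catalogues of real simplicial arrangements and of $n_3$-configurations realizable over $\mathbb{R}$ (the tables of Gr\"unbaum, Berman and Cuntz are natural starting points), collecting those with weak-combinatorics $(12;9,19)$. Particularly useful sources are configurations obtained from small near-pencils, from reflection subarrangements, or from restrictions/deletions of the classical $B_3$ or simplicial arrangements that fall into the correct count of triples and doubles. For each candidate I would write down explicit linear forms over $\mathbb{Q}$, or over a small real quadratic extension if unavoidable, defining the arrangement.

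The next step is to separate the two arrangements both combinatorially and algebraically. Non-isomorphism of intersection lattices can be certified by a local invariant; for example, for each line one records the partition of its intersection points by multiplicity, producing a multiset invariant of $\mathcal{L}$, and one looks for a pair of candidates whose multisets differ. Alternatively one compares the bipartite incidence graphs between triple points and lines. For the algebraic invariant, given the defining polynomial $f_i$ of $\mathcal{L}_i$ one computes the module of syzygies on the partial derivatives $(\partial_x f_i,\partial_y f_i,\partial_z f_i)$ and reads off the smallest degree in which a non-Koszul (equivalently, non-trivial) syzygy appears. These computations are well within reach of a computer algebra system such as \textsc{Singular} or \textsc{Macaulay2}, and their output gives a rigorous certificate.

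The main obstacle is not the verification but the search itself: the locus of pairs with identical weak-combinatorics yet distinct minimal Jacobian-relation degrees is a priori rather thin, and restricting to arrangements with only double and triple points is a genuine constraint, since such arrangements tend to have Jacobian syzygies of predictable low degree. Once a suitable witness pair is found, the argument collapses to a finite, machine-verifiable computation, and the statement follows.
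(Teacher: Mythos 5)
Your proposal is a search strategy, not a proof: the entire content of the theorem is the existence of a specific witness pair, and you never produce one. You correctly describe the verification pipeline (explicit linear forms, syzygy computation on the partials in \textsc{Singular} or \textsc{Macaulay2}, a lattice invariant to certify non-isomorphism), and this is indeed how the paper proceeds. But you explicitly defer the hard part --- ``the main obstacle is not the verification but the search itself'' --- and nothing in your argument guarantees the search terminates successfully. There is no a priori reason that two real arrangements with weak-combinatorics $(12; t_2,t_3)=(12;9,19)$ and non-isomorphic lattices must have different values of ${\rm mdr}$; that has to be checked on concrete examples, and it could perfectly well have failed for every candidate pair. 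An existence statement is not established by describing where one would look.

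For comparison, the paper's proof exhibits the pair explicitly: $\mathcal{L}_1$ is the Zacharias arrangement and $\mathcal{L}_2$ is a second stretchable example, both drawn from the list of $13$ rank-$3$ oriented matroids with $12$ pseudolines, $19$ triples and $9$ doubles arising in the generalized orchard problem (only $3$ of which are realizable over $\mathbb{R}$). Explicit defining polynomials are given, and \textsc{Singular} computes the minimal free resolutions of the Milnor algebras: $\mathcal{L}_1$ is $3$-syzygy with exponents $(6,7,8)$, so ${\rm mdr}(\mathcal{L}_1)=6$, while $\mathcal{L}_2$ is $4$-syzygy with exponents $(7,7,7,8)$, so ${\rm mdr}(\mathcal{L}_2)=7$. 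Your suggested catalogues (simplicial arrangements, $n_3$-configurations) are also not quite the right hunting ground: with $3\cdot 19=57$ line--triple-point incidences not divisible by $12$, these arrangements are not combinatorially regular configurations; the relevant source is the orchard-problem literature. To close the gap you would need to actually name two arrangements, give their equations, and report the computed resolutions.
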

It is worth noting that these two line arrangements were (re)discovered by the third author and Bokowski in \cite{BokPok}, and these examples show up naturally in studies devoted to a generalized orchard problem for pseudo-line arrangements.

Section \ref{sec: ziegler pairs} devoted to \emph{Ziegler pairs}.
Here by a Ziegler pair we understand a pair of line arrangements having isomorphic intersection lattices but having different minimal degrees of non-trivial Jacobian relations.
Recently, DiPasquale, Sidman, and Traves constructed such pairs using techniques from rigidity theory~\cite{DST23}.
Our approach to construct such pairs is based on the geometry of moduli spaces of hyperplane arrangements with a fixed intersection lattice. These moduli are known as \emph{matroid realization spaces}. 
More specifically, we exploit the recently discovered explicit matroids on $12$ elements exhibiting singularities in their realization spaces~\cite{corey2023singularmatroidrealizationspaces}.
This is a novel situation since, up to now, all examples of Ziegler pairs have been constructed using line arrangements with a smooth moduli spaces.
We then obtain the following result.
\begin{theorem}[{see Theorem \ref{ZZ}}]
There exists a pair of two line arrangements $\mathcal{L}_{1}, \mathcal{L}_{2} \subset \mathbb{P}^{2}_{\mathbb{C}}$ having isomorphic intersection lattices with
$$(d;t_{2}, t_{3}) = (12;24, 14) $$
that have different minimal degrees of Jacobian relations.
\end{theorem}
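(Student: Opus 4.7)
The plan is to follow the strategy suggested by the abstract and introduction: exploit the explicit $12$-element matroids with singular realization spaces constructed in \cite{corey2023singularmatroidrealizationspaces}. A Ziegler pair of the required shape should be obtained by picking two distinct realizations of a single such matroid that differ by sitting at geometrically inequivalent points of the realization space.

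First, I would scan the catalogue of $12$-element matroids with singular realization spaces from \cite{corey2023singularmatroidrealizationspaces} to identify one whose ground matroid is a simple rank-$3$ matroid whose flats yield the weak-combinatorics $(d;t_2,t_3)=(12;24,14)$. The key structural ingredient is that the realization space of such a matroid must contain at least two components, or a singular locus that is not contained in the closure of a single smooth stratum; this is precisely what the singular realization spaces in \cite{corey2023singularmatroidrealizationspaces} provide. From such a matroid I would select two representative realizations: one smooth point of the realization space, giving the arrangement $\mathcal{L}_1$, and one point that sits on a different component (or at the singular locus), giving the arrangement $\mathcal{L}_2$. By construction both arrangements realize the same matroid, so $L(\mathcal{L}_1)\cong L(\mathcal{L}_2)$, and in particular they share the weak-combinatorics $(12;24,14)$.

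Next, I would compute the minimal degree of a non-trivial Jacobian relation for each arrangement. Concretely, for each $\mathcal{L}_i$ I write down defining linear forms, form the Jacobian ideal of the product $f_i=\prod_{\ell\in\mathcal{L}_i}\ell$, and compute the minimal free resolution of the Milnor algebra (or equivalently the module of logarithmic derivations) using a computer algebra system such as \texttt{Macaulay2} or \texttt{Singular}. Reading off the lowest degree of a syzygy gives the invariant $\mathrm{mdr}(\mathcal{L}_i)$. The theorem is then proved once I exhibit explicit coordinates for $\mathcal{L}_1$ and $\mathcal{L}_2$ for which the printed resolutions show distinct minimal syzygy degrees.

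The main obstacle is a combined search-and-computation issue. On the search side, the matroid must be tuned so that (i) its flats reproduce the prescribed $t_2,t_3$ counts and (ii) its realization space is genuinely disconnected or singular in a way that is visible at the level of Jacobian syzygies; not every matroid with singular realization space will yield different $\mathrm{mdr}$. On the computation side, choosing numerically convenient representatives at the two strata (e.g.\ with entries in a small number field) is important so that the syzygy computation terminates and so that one can certify rigorously, rather than numerically, that the minimal syzygy degrees differ. Once a promising candidate from \cite{corey2023singularmatroidrealizationspaces} is fixed, the verification reduces to a finite, mechanical check.
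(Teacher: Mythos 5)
Your proposal follows essentially the same route as the paper: the authors take the $(3,12)$-matroid $M_1$ with singular realization space from \cite{corey2023singularmatroidrealizationspaces}, which has weak-combinatorics $(12;24,14)$, sample realizations on the smooth components (exponents $(8,8,8,8,8)$, so ${\rm mdr}=8$) and in the singular locus (exponents $(7,8,8,9,9)$, so ${\rm mdr}=7$), and certify the distinct resolutions by computer algebra. The only detail worth noting is that in the paper both maximal components give the \emph{same} resolution, and it is precisely the singular locus that produces the different ${\rm mdr}$ --- consistent with the hedge in your plan.
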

We also construct another Ziegler pair consisting of two arrangements having $12$ lines with isomorphic intersection lattices, but having different minimal free resolutions of the associated Milnor algebras but equal minial degrees of their Jacobian relations.

\section*{Computations}
Some of our main results were obtained using software computations. These results can be reproduced and verified using the content of this \texttt{github} repository.
\begin{center}
    \url{https://github.com/danteluber/numerical_terao_ziegler_pairs}
\end{center}
The repository also contains the query strings and results used in the proof of Theorem~\ref{thm:A}.

\section*{Acknowledgments}
Lukas Kühne is supported by the Deutsche Forschungsgemeinschaft (DFG, German Research Foundation) SFB-TRR \textbf{358/1 2023 -- 491392403} and SPP \textbf{2458 -- 539866293}.

Dante Luber has been supported by the Deutsche Forschungsgemeinschaft (DFG, German Research Foundation) GRK \textbf{2434} -- \textbf{385256563}.

Piotr Pokora is supported by the National Science Centre (Poland) Sonata Bis Grant  \textbf{2023/50/E/ST1/00025}. For the purpose of Open Access, the authors have applied a CC-BY public copyright license to any Author Accepted Manuscript (AAM) version arising from this submission.

\section{Preliminaries}\label{sec:prelim}
In this section we introduce derivation modules, Milnor algebras, and review the necessary concepts from the theory of matroid realization. While our work focuses on hyperplane arrangements, the relevant definitions can be stated for more general curves.
\subsection{Derivation modules} Let $S := \mathbb{C}[x,y,z] = \bigoplus_{k}S_{k}$ be the graded polynomial ring and let $f \in S$ be a homogeneous polynomial of degree $d$. Consider a reduced curve $C \, : \, f=0$ in $\mathbb{P}^{2}_{\mathbb{C}}$ defined by $f$. We denote by $\partial_{x}, \partial_{y}, \partial_{z}$ the partial derivatives and we denote by $J_{f}$ the Jacobian ideal of $f$, i.e., the ideal generated by the partials $\partial_{x} f, \, \partial_{y} f, \, \partial_{z}f$.
\begin{definition}
\label{hom}
Let $C : f=0$ be a reduced curve in $\mathbb{P}^{2}_{\mathbb{C}}$ of degree $d$ given by $f \in S$. Denote by $M(f) := S/ J_{f}$ the Milnor algebra. We say that $C$ is \emph{$m$-syzygy} when $M(f)$ has the following minimal graded free resolution:
$$0 \rightarrow \bigoplus_{i=1}^{m-2}S(-e_{i}) \rightarrow \bigoplus_{i=1}^{m}S(1-d - d_{i}) \rightarrow S^{3}(1-d)\rightarrow S$$
with $e_{1} \leq e_{2} \leq \ldots \leq e_{m-2}$ and $1\leq d_{1} \leq \ldots \leq d_{m}$. 
\end{definition}
\begin{definition}
The $m$-tuple $(d_{1}, \ldots, d_{m})$ in Definition \ref{hom} is called the \emph{exponents} of $C$.
\end{definition}
In light of Definition \ref{hom}, being an $m$-syzygy curve is a homological condition that is encoded by the shape of the minimal free resolution of the Milnor algebra. Among $m$-syzygy curves, the most crucial objects, from the viewpoint of our purposes, are the following.
\begin{definition}\label{def:free}
We say that a reduced curve $C \subset \mathbb{P}^{2}_{\mathbb{C}}$ of degree $d$ is \emph{free} if and only if $C$ is $2$-syzygy.
In this case it follows that $d_{1}+d_{2}=d-1$.
\end{definition}
\begin{definition}\label{def:pog}
We say that a reduced curve $C \subset \mathbb{P}^{2}_{\mathbb{C}}$ of degree $d$ is \emph{plus-one generated} if and only if $C$ is $3$-syzygy and it holds that $d_{1}+d_{2}=d$.
\end{definition}
By the above definition, a reduced plane curve $C \subset \mathbb{P}^{2}_{\mathbb{C}}$ is free if the corresponding minimal free resolution of the Milnor algebra is described by the Hilbert--Burch Theorem.
\begin{definition}
Let $C \subset \mathbb{P}^{2}_{\mathbb{C}}$ be a reduced curve given by $f \in S_{d}$.
We define the graded $S$-module of algebraic relations associated with $f$ as
$${\rm AR}(f) = \{(a,b,c)\in S^{\oplus 3} : af_x+bf_y+cf_z=0\}.$$
Then the minimal degree of Jacobian relations among the partial derivatives of $f$ is defined~as
$${\rm mdr}(f):=\min\{r : \, {\rm AR}(f)_{r} \neq 0 \}.$$
\end{definition}
\begin{remark}
In light of Definition \ref{hom}, we have 
$${\rm mdr}(f) = d_{1}.$$
\end{remark}
\subsection{Matroid realizations}

\begin{definition}\label{def:matroid bases}
    Let $E$ be a set finite set of cardinality $n$, and  $0<d\leq n$. A rank $d$ \emph{matroid} on the ground set $E$ is the pair $M=(E,\mathcal{B}(M))$, where $\mathcal{B}(M)\subset\binom{E}{d}$ is a non-empty set system closed under the following exchange relation: for any pair $B_1,B_2\in\mathcal{B}(M)$, and $x\in B_{1}\setminus B_{2}$, there exists $y\in B_{2}\setminus B_{1}$ such that $(B_{1}\setminus x)\cup y \in\mathcal{B}(M)$. 
\end{definition}
The collection $\mathcal{B}(M)$ is the set of \emph{bases} of $M$, and a set $A\in 2^{E}$ is \emph{independent} if it is contained in some basis. In Definition \ref{def:matroid bases} and throughout, we adopt the standard practice in matroid theory to omit bracket notation in the expression of a singleton set. Most frequently, we assume $E=[n]$, and refer to rank $d$ matroids on this ground set as $(d,n)$-matroids. 

Matroids were first introduced to combinatorially abstract independence relations amongst vectors. The following class of matroids is the most perfect reflection of this historical fact.

\begin{definition}\label{def: realizable matroid hyperplanes}
A $(d,n)$-matroid $M$ is \emph{$\mathbb{F}$-realizable} if there exists an arrangement of $n$ distinct projective hyperplanes $\mathcal{A}=\{H_{1},\dots, H_{n} \} \subset \mathbb{P}^{d-1}_{\mathbb{F}}$ such that $$\mathcal{B}(M)=\left\{\lambda\in\binom{[n]}{d}:\bigcap_{i\in\lambda} H_{i}=\emptyset\right\}.$$
\end{definition}

Now suppose $M$ is $\mathbb{F}$-realizable. The following moduli space encodes all realizations of $M$ over this field, up to a suitable notion of redundancy.

\begin{definition}\label{def:realization space}
    Let $M$ be a $(d,n)$-matroid realizable over a field $\mathbb{F}$. The \emph{realization space} of $M$ over $\mathbb{F}$ is the moduli space of hyperplane arrangements $$\mathcal{R}(M;\mathbb{F}) = \{\mathcal{A}:\mathcal{A}\text{ realizes }M\}/\sim,$$ where $\sim$ denotes the action of the projective linear group $\text{PGL}(d;\mathbb{F})$ mapping one arrangement to another.
\end{definition}

    Realization spaces of matroids are semi-algebraic sets, defined by polynomial systems of equations and inequalities. From the perspective of algebraic geometry, they can be studied as affine schemes. The following result tells us they are rich sources of algebro-geometric~data.
    \begin{theorem}[Mn\"ev's universality theorem \cite{Mnev88}]
        Every singularity type appears on the $\mathbb{C}$-realization space of some $(3,n)$-matroid.
    \end{theorem}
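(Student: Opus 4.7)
The plan is to carry out the Von Staudt encoding of arithmetic by projective incidences, translating an arbitrary polynomial system over $\mathbb{Z}$ into the defining equations of the realization space of a cleverly chosen $(3,n)$-matroid. First, I would rigidify the $\mathrm{PGL}(3;\mathbb{C})$-action by fixing a projective frame in $\mathbb{P}^2_{\mathbb{C}}$: a distinguished ``axis'' line $\ell$ together with four points in general position. Choosing two distinguished points $0$ and $1$ on $\ell$ identifies each remaining point of $\ell$ with a complex number, its \emph{scalar} coordinate. This reduces the study of realizations up to projective equivalence to the study of finite point-line configurations whose moduli are controlled by these scalar coordinates.

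Second, I would implement the two field operations by classical ruler-only constructions. Given two scalar points on $\ell$ with coordinates $a$ and $b$, a fixed number of auxiliary points off $\ell$ and connecting lines can be added so that the prescribed incidences force a new point on $\ell$ to have coordinate $a+b$; similarly for $ab$. Each such gadget corresponds to a fixed rank-$3$ matroid on its local ground set. Given an affine variety $V\subset \mathbb{A}^N_{\mathbb{Z}}$ cut out by polynomials $f_1,\dots,f_s\in\mathbb{Z}[x_1,\dots,x_N]$, I would evaluate each $f_j$ as a straight-line arithmetic program, chaining addition and multiplication gadgets together; requiring that each output scalar coincide with the point $0$ on $\ell$ produces a simple $(3,n)$-matroid $M_V$ whose realizations over $\mathbb{C}$ are, by construction, in bijection with tuples of $V(\mathbb{C})$ equipped with compatible auxiliary data.

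Third, I would verify that the natural ``forget the auxiliary data'' map $\pi\colon \mathcal{R}(M_V;\mathbb{C})\to V$ is a \emph{stable projection} in Mn\"ev's sense: over a suitable Zariski open subset its fibers are affine spaces parametrizing the free auxiliary coordinates in the gadgets, so the total space is stably equivalent (i.e.\ isomorphic to a product with an affine space, up to open immersion) to $V$. Since stable equivalence preserves the analytic singularity type of a germ, every singularity germ of $V$ reappears on $\mathcal{R}(M_V;\mathbb{C})$; combined with the observation that any prescribed algebraic singularity type can be realized by a $V$ defined over $\mathbb{Z}$ (via a resultant/elimination reduction of the given equations to integer ones, or Artin approximation), this closes the argument. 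The hard part is the second and third steps together: one must design the arithmetic gadgets so that their \emph{only} realizations enforce the intended field identity — with no spurious ``phantom'' realizations arising from unintended collinearities — and then show that after gluing many gadgets the map $\pi$ remains a stable projection globally, not just fiberwise. This combinatorial-geometric bookkeeping, controlling simultaneously the incidences, the nondegeneracies, and the freedom of the fiber coordinates, is the technical heart of Mn\"ev's original argument.
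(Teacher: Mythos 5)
The paper does not prove this statement; it is quoted as a known result and attributed to \cite{Mnev88}, so there is no internal proof to compare against. Your outline is the standard argument behind Mn\"ev universality --- killing the $\mathrm{PGL}(3)$-action by a projective frame, Von Staudt addition and multiplication gadgets on a scalar axis, chaining them into a straight-line program for the defining polynomials, and transporting singularities via stable equivalence --- and you correctly identify where the real work lies (excluding phantom realizations from unintended incidences and verifying that the projection stays a stable/trivial fibration after gluing; in the literature this is handled by putting the polynomial system into a normal form with totally ordered, pairwise distinct intermediate values, as in Mn\"ev's and Lafforgue's treatments). One genuine imprecision in your last step: the claim that \emph{any} prescribed singularity type can be realized by a variety $V$ defined over $\mathbb{Z}$ is false as stated --- there are uncountably many analytic singularity germs but only countably many matroids --- so neither elimination nor Artin approximation can close that gap. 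The theorem must be read (as in Mn\"ev's and Vakil's formulations) as ranging over singularity types of schemes of finite type over $\mathbb{Z}$ (equivalently, over $\mathbb{Q}$ or any fixed countable base), up to smooth factors; with that reading your reduction is unnecessary and the rest of your sketch is the intended proof.
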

   The constructive algorithm of Mn\"ev is useful (it was applied by Vakil to show many moduli spaces are arbitrarily singular), but yields a matroid on a large ground set. In \cite{corey2023singularmatroidrealizationspaces} the second author used \texttt{OSCAR} to obtain examples of $(3,n)$-matroids with singular realization spaces, showing that $n=12$ is minimal for matroids admitting such singularities.
   This result is the starting point in searching for new examples of Ziegler pairs that we are going to study in Section \ref{sec: ziegler pairs}.
\begin{remark}\label{rem: realizations vs. strata}
    In the literature, realization spaces refer to subvarieties in Grassmannians indexed by matroids. Elsewhere, these subvarieties are known as \emph{matroid strata} or \emph{thin Schubert cells} in Grassmannians.  In this text, we only consider moduli of hyperplane arrangements from Definition \ref{def:realization space}. However, we remark that the realization spaces we consider are closely related to these Grassmannian subvarieties via the \emph{Gelfand-MacPherson correspondence}.
\end{remark}
\subsection{Characteristic polynomials for hyperplane arrangements}
Let $\mathcal{A} \subset \mathbb{F}^{d}$ be a central hyperplane arrangement, i.e., all hyperplanes are passing through $0 \in \mathbb{F}^{d}$. This situation corresponds to the scenario that we can consider $\mathcal{A}$ as an arrangement in $\mathbb{P}^{d-1}_{\mathbb{F}}$. The intersection lattice $L(\mathcal{A})$ is the set of all subsets of the form $\cap_{H \in \mathcal{B}}H$ where $\mathcal{B} \subset \mathcal{A}$ is a subarrangement. Define $L_{i}(\mathcal{A}) := \{X \in L(\mathcal{A}) \, : \, {\rm codim}_{\mathbb{F}^{d}} \, X = i\}$. The M\"obius function $\mu : L(\mathcal{A}) \rightarrow \mathbb{Z}$ is defined by $\mu(V)=1$ and $\mu(X) = - \sum_{V \supset Y \supset X}\mu(Y)$ --- we assume also $X \neq \mathbb{F}^{d}$. 
\begin{definition}
The characteristic polynomial of $\mathcal{A}$ is defined as
$$\chi(\mathcal{A};t) = \sum_{X \in L(\mathcal{A})}\mu(X)t^{{\rm dim}\, X}.$$
\end{definition}
Since for central arrangements $\mathcal{A} \subset \mathbb{F}^{d}$ we know that $(t-1)$ is a divisor of $\chi(\mathcal{A},t)$, we can define a reduced characteristic polynomial of $\mathcal{A}$ as
$$\chi_{0}(\mathcal{A},t) = \chi(\mathcal{A},t)/(t-1).$$
\begin{remark}\label{rem:char_poly}
If $\mathcal{L} \subset \mathbb{P}^{2}_{\mathbb{F}}$ is a line arrangement having the weak-combinatorics $W(\mathcal{L}) = (d;t_{2}, \ldots, t_{d})$,  
then
$$\chi(\mathcal{L},t) = t^{3}-dt^{2} + \bigg(\sum_{r\geq 2}(r-1)t_{r}\bigg)t - \bigg(\sum_{r\geq 2}(r-1)t_{r} + 1 - d\bigg).$$
\end{remark}
Recall the following fundamental result which characterizes free line arrangements.
\begin{theorem}[Terao's factorization theorem]
If $\mathcal{L} \subset \mathbb{P}^{2}_{\mathbb{F}}$ is a free arrangement of lines, then
$$\chi_{0}(\mathcal{L},t) = (t-d_{1})(t-d_{2}),$$
where $(d_{1},d_{2})$ are the exponents of $\mathcal{L}$. In particular, this result tells us that the freeness determines $\chi_{0}(\mathcal{L},t)$.
\end{theorem}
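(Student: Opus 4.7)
My plan is to deduce Terao's factorization from two classical pillars: the Orlik--Solomon identity relating $\chi(\mathcal{A},t)$ to the Poincar\'e polynomial of the arrangement complement, and Terao's theorem that this Poincar\'e polynomial factors according to the exponents for a free arrangement. I would pass to the cone of $\mathcal{L}$ in $\mathbb{C}^{3}$, read off the exponents of the derivation module from Definition~\ref{def:free}, and then multiply the two factorizations.

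First I would set up the cone $\mathcal{A} \subset \mathbb{C}^{3}$ of $\mathcal{L}$, a central arrangement of $d$ planes cut out by the same form $f$. The logarithmic derivation module $D(\mathcal{A})$ decomposes as $S\theta_{E} \oplus \mathrm{AR}(f)$, where $\theta_{E} = x\partial_{x}+y\partial_{y}+z\partial_{z}$ is the Euler derivation. Freeness in the sense of Definition~\ref{def:free} is precisely the condition that $\mathrm{AR}(f)$ is free of rank $2$ with homogeneous generators in degrees $d_{1}, d_{2}$, so $D(\mathcal{A})$ is a free graded $S$-module of rank $3$ with exponents $(1, d_{1}, d_{2})$, summing to $d$ by Saito's criterion. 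This is compatible with $d_{1}+d_{2}=d-1$ in Definition~\ref{def:free}.

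Next, two classical inputs combine. The Orlik--Solomon identity, which is purely combinatorial, states
$$\pi(M(\mathcal{A}), t) = (-t)^{3}\,\chi(\mathcal{A}, -1/t),$$
with $M(\mathcal{A}) = \mathbb{C}^{3} \setminus \bigcup_{H \in \mathcal{A}} H$. Terao's theorem on Poincar\'e polynomials, where freeness genuinely enters, asserts that for a free $\mathcal{A}$ with exponents $(e_{1}, \ldots, e_{\ell})$,
$$\pi(M(\mathcal{A}), t) = \prod_{i=1}^{\ell}(1 + e_{i}t).$$
Specialising to $(e_{1}, e_{2}, e_{3}) = (1, d_{1}, d_{2})$ and solving the first identity for $\chi$ yields $\chi(\mathcal{A}, t) = (t-1)(t - d_{1})(t - d_{2})$, so that $\chi_{0}(\mathcal{L},t) = (t-d_{1})(t-d_{2})$ after dividing out $(t-1)$.

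The main obstacle is Terao's factorization of $\pi(M(\mathcal{A}), t)$ for free $\mathcal{A}$; this is the real content, while everything else is bookkeeping. The standard proof proceeds via the Solomon--Terao formula, which expresses $\chi(\mathcal{A},t)$ as an alternating combination of Hilbert series of the modules $D^{p}(\mathcal{A})$ of logarithmic $p$-forms, and then uses freeness to reduce each such Hilbert series to tractable shifted copies of $1/(1-s)^{\ell}$, causing the alternating sum to collapse to $\prod_{i}(t - e_{i})$. A self-contained alternative that I would try first is induction on $|\mathcal{A}|$ via the Addition--Deletion theorem for free arrangements, with a Boolean arrangement as the base case, since Addition--Deletion transports both the freeness structure and the multiplicative factorization of $\chi$ along the inductive step.
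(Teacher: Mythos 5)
The paper does not prove this statement at all: it is recalled as a classical fact (Terao's factorization theorem, see Orlik--Terao, \emph{Arrangements of Hyperplanes}, Thm.~4.137), so there is no in-paper argument to compare against. Judged on its own, your primary route is the correct standard one. The coning step, the splitting $D(\mathcal{A}) \cong S\theta_{E} \oplus \mathrm{AR}(f)$, the identification of the exponents as $(1,d_{1},d_{2})$ with sum $d$ via Saito's criterion, and the final change of variables are all sound, and you correctly isolate the real content as the factorization of the Poincar\'e polynomial of a free arrangement, whose standard proof is the Solomon--Terao formula. Two caveats. First, the theorem is stated over an arbitrary field $\mathbb{F}$, while your Orlik--Solomon step goes through the topological complement $M(\mathcal{A}) \subset \mathbb{C}^{3}$, which only makes sense over $\mathbb{C}$; you should instead use the combinatorially defined Poincar\'e polynomial $\pi(\mathcal{A},t)=\sum_{X\in L(\mathcal{A})}\mu(X)(-t)^{\mathrm{codim}\,X}$, which is related to $\chi$ by the same change of variables tautologically, or simply state Solomon--Terao directly as a factorization of $\chi(\mathcal{A},t)=\prod_{i}(t-e_{i})$ --- this works over any field and removes the detour through topology entirely. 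Second, your ``self-contained alternative'' via Addition--Deletion is not a complete proof: induction from the Boolean arrangement only reaches the \emph{inductively free} arrangements, and there exist free line arrangements that are not inductively free (the smallest simple examples have $13$ lines), so that route would leave precisely the interesting cases uncovered. Stick with Solomon--Terao.
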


The next lemma is a combinatorial restriction of free line arrangements with an intersection point of high multiplicity.
We will use it to deduce non-freeness below.
The proof uses the theory of multiarrangements as introduced by Ziegler~\cite{Ziegler_multi}.
We refrain from stating all definitions in this more general situation as we will not use them beyond this proof --- see~\cite{Yoshinaga} for details.
\begin{lemma}\label{lem:nonfree_arr}
	Let $\mathcal{L}$ be a free line arrangement with $n$ lines and an $k$-fold point, i.e., an intersection point where $k$ lines meet.
	Assume that $2k\ge n+1$.
	Then $k-1$ must be a root of the reduced characteristic polynomial of $\mathcal{L}$.
\end{lemma}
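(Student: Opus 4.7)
My plan is to reduce the assertion to a two-dimensional multiarrangement computation via Ziegler's restriction theorem. Fix the free arrangement $\mathcal{L}$ with exponents $(d_1,d_2)$, so $d_1+d_2=n-1$ and $\chi_0(\mathcal{L},t)=(t-d_1)(t-d_2)$ by Terao's factorization. Choose a line $H\in\mathcal{L}$ passing through the $m$-fold point $p$, and consider the Ziegler multi-restriction $(\mathcal{L}^H, m^H)$ on $H$, where for $q\in H$ one sets $m^H(q)=|\{\ell\in\mathcal{L}\setminus\{H\}:\ell\cap H=q\}|=\mathrm{mult}_q(\mathcal{L})-1$. Ziegler's restriction theorem (see~\cite{Yoshinaga}) guarantees that, because $\mathcal{L}$ is free, this multi-arrangement is free with exponents exactly $(d_1,d_2)$.

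Next I would read off the numerical data of the multi-arrangement on $H$. The point $p$ contributes multiplicity $m-1$; the maximality of $m$ in $\mathcal{L}$ forces every other point $q\neq p$ on $H$ to satisfy $m^H(q)\le m-1$; and the total multiplicity equals $n-1$. Thus the hypothesis $2m\ge n+1$ translates into the crucial inequality $m-1\ge (n-1)-(m-1)=n-m$, i.e., the largest multiplicity is at least the sum of all the remaining multiplicities.

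The core step is to show that in this ``unbalanced'' regime the exponents of $(\mathcal{L}^H, m^H)$ are precisely $(n-m,\,m-1)$. I would verify this via Saito's criterion in coordinates where $\alpha_p = x$: any homogeneous derivation $\theta=f\partial_x+g\partial_y$ of degree $e<m-1$ in $D(\mathcal{L}^H, m^H)$ forces $f=0$ from the condition $x^{m-1}\mid\theta(x)=f$, and then the remaining conditions $\alpha_q^{m^H(q)}\mid \theta(\alpha_q)$ force $\prod_{q\ne p}\alpha_q^{m^H(q)}\mid g$, yielding $e\ge n-m$. The derivation $\theta_2=\bigl(\prod_{q\ne p}\alpha_q^{m^H(q)}\bigr)\partial_y$ attains degree $n-m$. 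For the second basis element, a parameter count on homogeneous polynomials of degree $m-1$ in two variables ($m$ coefficients) against the $n-m$ congruence conditions imposed by the lines $q\ne p$ is feasible precisely because $m\ge n-m+1$, i.e., because $2m\ge n+1$; Saito's determinant criterion then certifies freeness with exponents $(n-m,m-1)$.

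Combining the two inputs yields $\{d_1,d_2\}=\{n-m,m-1\}$, so $m-1$ is a root of $\chi_0(\mathcal{L},t)$, as desired. I expect the main obstacle to be not the existence of the two basis derivations above (which is a direct linear-algebra argument), but rather making sure the reader accepts the correct formulation of Ziegler's restriction theorem for multi-arrangements and that the numerical hypothesis $2m\ge n+1$ is exactly what pushes the two-dimensional multiarrangement into the unbalanced regime where the larger exponent equals the largest multiplicity.
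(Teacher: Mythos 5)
Your proposal is correct and follows essentially the same route as the paper: restrict to the Ziegler multiarrangement on a line through the $m$-fold point, note that $2m\ge n+1$ places this rank-two multiarrangement in the unbalanced regime where its exponents are $(n-m,m-1)$, and compare with the exponents $(d_1,d_2)$ transferred from the free arrangement via Ziegler's restriction theorem. The only difference is that you sketch a direct Saito-criterion proof of the exponent computation for the unbalanced multirestriction, whereas the paper simply cites \cite[Prop.~1.23~(i)]{Yoshinaga} for that step.
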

\begin{proof}
	Say the roots of $\chi_{0}(\mathcal{L},t)$ are $(a,b)$ for some natural numbers $a,b$.
	By the above discussion and the assumption, $\mathcal{L}$ is free with exponents $(a,b)$.
	Let $H\in \mathcal{L}$ be a line passing through the intersection point with multiplicity $k$.
	Thus, the Ziegler restriction $(\mathcal{L}^H,k^H)$ is a free $2$-multiarrangement with a line of multiplicity $k-1$ by~\cite{Ziegler_multi}.
	The condition $2k\ge n+1$ ensures that the exponents of $(\mathcal{L}^H,k^H)$ are $(k-1,n-m)$~\cite[Prop. 1.23 (i)]{Yoshinaga}.
	As $\mathcal{L}$ is assumed to be free with exponents $(a,b)$ by Ziegler's criterion, $(\mathcal{L}^H,k^H)$ has the exponents $(a,b)$.
	Therefore, $m-1$ must be a root of $\chi_{0}(\mathcal{L},t)$.
\end{proof}

\section{Numerical Terao's Conjecture}\label{sec:numerical teroa}

We start by describing the two line arrangements $\mathcal{A}$ and $\mathcal{B}$ with $13$ lines having the same weak-combinatorics, but $\mathcal{A}$ is free and $\mathcal{B}$ is not free but plus-one-generated.

\begin{example}\label{ex:ntc1}
Let $\mathcal{A}$ be the complex line arrangement given by the following defining equation
\begin{align*}
Q_{\mathcal{A}}(x,y,z)=&xyz(x+y)(x+iy)(x+(1+i)y)(x+z)(2x+(1+i)z)(x+iz)\\
&(y-z)(-2y+(1+i)z)(x+iy+z)(x+(-1+i)y+z).
\end{align*}
The weak-combinatorics of $\mathcal{A}$ is $(d_{1};t_{2}, t_{3}, t_{4}, t_{5}) = (13; 16, 6, 4, 2)$.
An \texttt{OSCAR} computation yields that $\mathcal{A}$ is free with exponents $(6,6)$.
\end{example}

\begin{example}\label{ex:ntc2}
	Let $\varphi=\frac{1+\sqrt{5}}{2}$ be the golden ratio and $\mathcal{B}$ be the real line arrangement given by the following defining equation
	\begin{align*}
	Q_{\mathcal{B}}(x,y,z)=&xyz(x+y)(x-\varphi y)(x+(1-\varphi)y)(x+z)(x-\varphi z)(x+(1+\varphi)z)\\
	&(y-z)(y-(1+\varphi)z)(x-y+z)(x+(1-\varphi)y+\varphi z).
	\end{align*}
	The weak-combinatorics of $\mathcal{B}$ is $(d_{1};t_{2}, t_{3}, t_{4}, t_{5}) = (13; 16, 6, 4, 2)$ as well.
	This arrangement is depicted in Figure~\ref{fig:arr_B}.
	
	\texttt{OSCAR} yields that module of logarithmic derivations has a minimal set of generators with degrees $(5,8,8)$ with one relation generated in degree $9$.
	Thus, $\mathcal{B}$ is plus-one-generated with exponents $(5,8,8)$.
	
	Adding the line $y+\varphi z=0$ yields a free arrangement with the exponents $(5,8)$.
	This is the red line in Figure~\ref{fig:arr_B}.
	Thus, $\mathcal{B}$ is the deletion of a free arrangement and hence plus-one-generated~\cite{Abe21}.
\end{example}
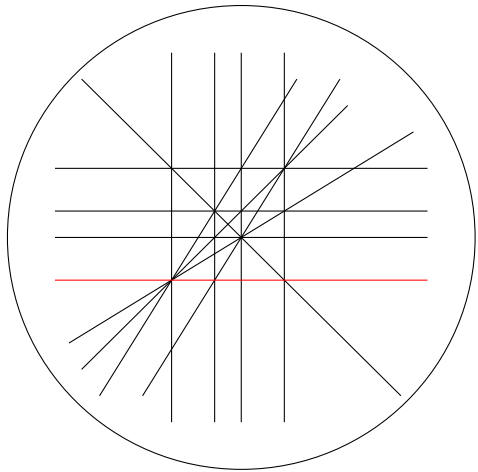
\begin{figure}
	\begin{center}
	\begin{tikzpicture}[scale=.35]
	\def\b{(1+sqrt(5))/2}
	\draw[domain=-6:6, samples=100, smooth, variable=\x, black] plot (\x, {-\x});
	\draw[domain=-7:7, samples=100, smooth, variable=\x, black] plot (0,\x);
	\draw[domain=-7:7, samples=100, smooth, variable=\x, black] plot (-1,\x);
	\draw[domain=-7:7, samples=100, smooth, variable=\x, black] plot ({\b},\x);
	\draw[domain=-7:7, samples=100, smooth, variable=\x, black] plot ( {-\b-1},\x);
	\draw[domain=-7:7, samples=100, smooth, variable=\y, black] plot (\y,0);
	\draw[domain=-7:7, samples=100, smooth, variable=\y, black] plot (\y,1);
	\draw[domain=-7:7, samples=100, smooth, variable=\y, black] plot (\y,{\b+1});
	\draw[domain=-6:6, samples=100, smooth, variable=\x, black] plot ( {(\b-1)*\x},\x);
	\draw[domain=-4:4, samples=100, smooth, variable=\x, black] plot ({\b*\x},\x);
	
	\draw[domain=-6:6, samples=100, smooth, variable=\x, black] plot ({(\b-1)*\x-\b}, {\x});
	\draw[domain=-5:5, samples=100, smooth, variable=\x, black] plot ({\x-1}, {\x});
	\draw[domain=-7:7, samples=100, smooth, variable=\y, red] plot (\y,{-\b});
	
	\draw (0,0) circle (250pt);
	\end{tikzpicture}
\end{center}
\caption{The black lines are projectivized picture of $\mathcal{B}$ with the circle the line at infinity.
Adding the red line $y+\varphi z=0$ yields a free arrangement.}
\label{fig:arr_B}
\end{figure}

These two examples prove the first part of Theorem~\ref{thm:A}, namely they provide a new counterexample to the Numerical Terao's Conjecture (NTC) on $13$ lines.
We proceed by proving the second part that says that there this no counterexample to the NTC arrangements with up to $12$ or $14$ lines.
\begin{proof}[Proof of Theorem~\ref{thm:A}]
First note that the weak-combinatorics of a line arrangement determines its characteristic polynomial, see Remark~\ref{rem:char_poly}. Assume now that the line arrangements $\mathcal{A}$ and $\mathcal{B}$ form a counterexample to the Numerical Terao's Conjecture, say $\mathcal{A}$ is free and $\mathcal{B}$ is not, but their weak-combinatorics agree.
Thus, the characteristic polynomials of these arrangements agree.
Since $\mathcal{A}$ is free, the polynomial factors over the integers by Terao's factorization theorem.

There is a database of all matroids of rank three with up to $14$ elements whose characteristic polynomial factors over the integers~\cite{Bar21}. 	Therefore, the matroids underlying the arrangements $\mathcal{A}$ and $\mathcal{B}$ appear in this database if their size is at most $14$.
We use this observation to query the database for pairs of matroids having the same weak-combinatorics, which are both realizable over $\mathbb{C}$ with at least one of them being not divisionally free.
The latter is a combinatorial, i.e., matroidal criterion~\cite{Abe16}. which ensures that all realizations are free.
We record the results of this query separately for each size of the ground set $n$:
\begin{enumerate}
	\item[($n\le 9$)] There is no pair of matroids on at most $9$ elements satisfying the above criteria.
	\item[($n=10$)] The query yields exactly one pair	of matroids both having the weak-com\-bi\-natorics $(d;t_2,t_3,t_4,t_5,t_6,t_7)=(10;21,1,0,0,0,1)$ with reduced characteristic polynomial $(t-4)(t-5)$.
	Thus, there is a $7$-fold point but $6$ is not a root of the reduced characteristic polynomial.
	Moreover, it holds that $2\cdot 7 \ge 10+1$.
	Therefore, all realizations of these two matroids are non-free by Lemma~\ref{lem:nonfree_arr}.
	\item[($n=11$)] The query yields two types of pairs of matroids on $11$ elements.
	The first type of pairs among three matroids has the weak-combinatorics \[(d;t_2,t_3,t_4,t_5,t_6,t_7)=(11;19,5,0,0,0,1)\]
	and reduced characteristic polynomial $(t-5)^2$.
	Again by Lemma~\ref{lem:nonfree_arr} all realizations of these matroids are non-free (as also $2\cdot 7 \ge 11+1$).

	Moreover, there are four matroids having the weak-combinatorics \[(d;t_2,t_3,t_4)=(11;10,5,5)\]
	with reduced characteristic polynomial $(t-5)^2$.
	All their realizations consist of two (Galois conjugate) points and we checked with \texttt{OSCAR} that they yield free arrangements in each case.
	\item[($n=12$)]
	Querying the database yields six matroids on $12$ elements, all having the  weak-combinatorics
	\[
	(d;t_2,t_3,t_4,t_5,t_6,t_7,t_8)=(12;26,4,0,0,0,0,1),
	\]
	with reduced characteristic polynomial $(t-5)(t-6)$.
	As they all have a $8$-fold point and $7$ is not a root of the reduced characteristic polynomial, we can again deduce using Lemma~\ref{lem:nonfree_arr} that all realizations are non-free.
	\item[($n=13$)]
	There are many pairs of matroids with a $k$-fold point whose reduced characteristic polynomial does not have the root $k-1$ for $k=8,9$.
	By Lemma~\ref{lem:nonfree_arr} all realizations are non-free.
	Moreover, there are several pairs of matroids with a $5$-fold point and reduced characteristic polynomial $(t-6)^2$.
	These pairs have the following distinct weak-combinatorics:
	\begin{itemize}
		\item $(d;t_2,t_3,t_4,t_5)=(13;16,6,4,2)$: There are $13$ $\mathbb{C}$-realizable matroids with this weak-combinatorics, $11$ are divisionally free and $2$ are not divisionally free.
		The two not divisionally free matroids are the ones described in the Examples~\ref{ex:ntc1} and~\ref{ex:ntc2}, thus one of them yields free realizations and the other one yields non-free realizations which is a counterexample to the NTC.
		Moreover, picking the matroid with the non-free realization and any of the divisionally free matroids yields $11$ further counterexamples to the NTC.
		\item $(d;t_2,t_3,t_4,t_5)=(13;12,12,0,3)$: There are $2$ $\mathbb{C}$-realizable matroids with this weak-combinatorics, $1$ divisionally free and $1$ not divisionally free.
		The one which is not divisionally free again yields a non-free realization.
		Thus, this is another counterexample to the NTC.
		\item $(d;t_2,t_3,t_4,t_5)=(13;14,6,6,1)$: There are $9$ $\mathbb{C}$-realizable matroids with this weak-combinatorics, $8$ divisionally free and $1$ not divisionally free.
		The realization space of the matroid which is not divisionally free consists of two points which both yield free arrangements.
		Thus, this is no counterexample to the NTC.
		\item $(d;t_2,t_3,t_4,t_5)=(13;15,9,1,3)$: There are $3$ $\mathbb{C}$-realizable matroids with this weak-combinatorics, $2$ divisionally free and $1$ not divisionally free.
		The one which is not divisionally free indeed yields a non-free realization.
		Hence, this yields two more pairs of matroids which form counterexamples to the NTC.
	\end{itemize}
	\item[($n=14$)] All appearing pairs of matroids have a $k$-fold point for $k=9,10$ but their reduced characteristic polynomial does not have the root $k-1$.
	Thus, by Lemma~\ref{lem:nonfree_arr} all realizations are again non-free.
	Hence, there is no counterexample to the NTC with $14$ lines.\qedhere
\end{enumerate}
\end{proof}

\section{Weak Ziegler pairs}\label{sec:weak ziegler}
Let us start our discussion with the following crucial definition introduced by Ziegler in~\cite{Ziegler}.
\begin{definition}[Ziegler pair]
We say that two line arrangements $\mathcal{L}_{1}, \mathcal{L}_{2} \subset \mathbb{P}^{2}_{\mathbb{C}}$ form a \emph{Ziegler pair} if the intersection lattices of $\mathcal{L}_{1}$ and $\mathcal{L}_{2}$ are isomorphic, but the minimal free resolutions of the associated Milnor algebras are different. In particular, a pair of line arrangements $\mathcal{L}_{1}, \mathcal{L}_{2} \subset \mathbb{P}^{2}_{\mathbb{C}}$ having isomorphic intersection lattices such that ${\rm mdr}(\mathcal{L}_{1}) \neq {\rm mdr}(\mathcal{L}_{2})$ forms a Ziegler pair.
\end{definition}
Ziegler pairs have a strong connection with presumably existing counterexamples to the classical Terao's freeness conjecture, since if we could find a counterexample to this conjecture, it would be all about the minimal degrees of non-trivial Jacobian relations being different \cite[Conjecture 3.5]{dimcafreecurves}.

The first pair of line arrangements with the same intersection posets but different minimal degrees of non-trivial Jacobian relations was constructed by Ziegler \cite{Ziegler}. His pair consists of two arrangements having exactly $9$ lines in the real projective plane with $6$ triple and $18$ double points as intersections, but the geometries of these arrangements are completely different. In the first case, all six triple points lie on a smooth conic, but in the second case, only $5$ triple points lie on a conic, and one point is off the conic. Using the language of linear systems of curves, $6$ points on a smooth conic impose \emph{dependent conditions} and this is the reason why this is a quite special situation. For more about this Ziegler pair and other pairs constructed using this example we refer to \cite{dimcafreecurves}.

In order to understand how weak-combinatorics can reflect on algebraic properties of plane curves, the third author introduced the following notion.

\begin{definition}[Weak Ziegler pair]
Consider two reduced plane curves $C_{1}, C_{2} \subset \mathbb{P}^{2}_{\mathbb{C}}$ such that all irreducible components of $C_{1}$ and $C_{2}$ are smooth. We say that a pair $(C_{1},C_{2})$ forms a \emph{weak Ziegler pair} if $C_{1}$ and $C_{2}$ have the same weak-combinatorics, but they have different minimal degrees of non-trivial Jacobian relations, i.e., ${\rm mdr}(C_{1}) \neq {\rm mdr}(C_{2})$.
\end{definition}
Clearly, the notion of a weak Ziegler pair in the above sense is designed for reduced plane curves, i.e., plane curve arrangements, but still it is an interesting problem to study weak Ziegler pairs in the class of line arrangement. Here we would like to present an example of two real line arrangements that form a weak Ziegler pair. The story behind these two line arrangement dates back to the classical orchard problem. In \cite{BokPok}, Bokowski and the third author presented a list of $13$ rank-$3$ oriented matroids consisting of $12$ pseudolines, $19$ triple and $9$ double intersections. It turned out that only $3$ among these $13$ oriented matroids can be realized over the real numbers as line arrangements, i.e., only $3$ of them are stretchable. It turns out that one arrangement is the well-known B\"or\"oczky line arrangement \cite{FuPa1984}, the second is the lesser known Zacharias arrangement \cite{Z41}, and the third seemed to be a new example. 
These three arrangements are not isomorphic, i.e., their intersection lattices are not pairwise isomorphic, but they have the same weak-combinatorics.

\begin{theorem}
\label{WZ}
There exists a pair of two real line arrangements $\mathcal{L}_{1}, \mathcal{L}_{2}$ consisting of $12$ lines with $19$ triple and $9$ double intersection points such that $\mathcal{L}_{1}$ is $3$-syzygy with ${\rm mdr}(\mathcal{L}_{1}) = 6$ and $\mathcal{L}_{2}$ is $4$-syzygy with ${\rm mdr}(\mathcal{L}_{1}) = 7$.
\end{theorem}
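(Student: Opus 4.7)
The plan is to construct the two arrangements explicitly, then extract their homological invariants directly from the minimal free resolution of the associated Milnor algebra via a computer algebra computation. Two of the three stretchable oriented matroids from \cite{BokPok} will serve as $\mathcal{L}_1$ and $\mathcal{L}_2$: I would pick the ones whose realizations force distinct minimal degrees of Jacobian relations (so for instance the classical B\"or\"oczky arrangement as one candidate, paired with the third, newly observed arrangement). The first step is to list explicit defining linear forms for the $12$ lines in each case over $\mathbb{Q}$ or a small real extension (both realizations are rational or lie in a small real quadratic field by \cite{BokPok}), and to set $f_1,f_2\in \mathbb{C}[x,y,z]$ to be the products of these linear forms.

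Next, I would verify combinatorially that in both arrangements the intersection data really is $t_{2}=9$, $t_{3}=19$, $t_{i}=0$ for $i\ge 4$; a direct enumeration of pairwise intersections suffices. This pins down the weak-combinatorics $(d;t_2,t_3)=(12;9,19)$ and, via Remark~\ref{rem:char_poly}, produces the common characteristic polynomial. Already at this point it is worth noting that Terao's factorization theorem eliminates freeness for both arrangements unless $\chi_0$ factors over the integers, which is easy to check with the numerics above.

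The decisive step is a direct computation of the graded minimal free resolution of $M(f_i)=S/J_{f_i}$ for $i=1,2$. This can be done in \texttt{OSCAR} (or \texttt{Singular}/\texttt{Macaulay2}) by computing the Jacobian ideal and reading off the Betti diagram; equivalently, one computes the module ${\rm AR}(f_i)$ of syzygies of $(\partial_x f_i,\partial_y f_i,\partial_z f_i)$ and inspects its graded generators and relations, as in Definition~\ref{hom}. One expects to read off: for $\mathcal{L}_1$, three generators of ${\rm AR}(f_1)$ in degrees consistent with a $3$-syzygy arrangement whose smallest degree is $6$; for $\mathcal{L}_2$, four generators of ${\rm AR}(f_2)$ whose minimal degree is $7$, witnessing that $\mathcal{L}_2$ is $4$-syzygy. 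Since ${\rm mdr}(f_i)=d_1$ by the remark after Definition~\ref{def:pog}, this produces ${\rm mdr}(\mathcal{L}_1)=6\neq 7={\rm mdr}(\mathcal{L}_2)$ and simultaneously certifies the $m$-syzygy values.

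The main obstacle is not conceptual but practical: one must be sure that the two chosen arrangements really are non-isomorphic (their intersection lattices differ by the realizability results of \cite{BokPok}, but this should be stated explicitly) and that the syzygy computation is carried out over a coefficient field that preserves the minimal free resolution (working over $\mathbb{Q}$ or the relevant real number field is safe, since passing to $\mathbb{C}$ does not alter graded Betti numbers of a polynomial ideal defined over a subfield). With those points verified, the pair $(\mathcal{L}_1,\mathcal{L}_2)$ is a weak Ziegler pair with the announced invariants.
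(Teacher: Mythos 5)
Your proposal follows essentially the same route as the paper: write down explicit defining polynomials for two of the three stretchable $(12;9,19)$-arrangements from \cite{BokPok} and read off the graded minimal free resolution of each Milnor algebra by a computer algebra computation, which yields exponents $(6,7,8)$ for one and $(7,7,7,8)$ for the other. The only difference is the specific choice of pair --- the paper uses the Zacharias arrangement $\mathcal{C}_7$ together with the $\mathcal{C}_2$-arrangement rather than the B\"or\"oczky arrangement, but your stated selection criterion (pick the two whose computed ${\rm mdr}$ values differ) leads to the same conclusion.
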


\begin{proof}
The arrangement $\mathcal{L}_{1}$ is the Zacharias arrangement constructed in \cite{Z41} that was rediscovered in \cite{BokPok}, which also appeared in \cite{BokPok} denoted as $\mathcal{C}_{7}$-arrangement. The arrangement $\mathcal{L}_{1}$ is given by the following defining equation
\begin{align*}
Q_{1}(x,y,z) = (y+2z)(y+3z)(y+5z)x(x-y)(x-y-4z)(x-y-6z)(x+y) \\ (x+y+4z)(x+y+6z)\bigg(y+\frac{x}{3}+\frac{10z}{3}\bigg)\bigg(y-\frac{x}{3}+\frac{10z}{3}\bigg).
\end{align*}
Using \verb}SINGULAR}, we can compute the minimal free resolution of the Milnor algebra of $\mathcal{L}_{1}$ obtaining
$$0 \rightarrow S(-21) \rightarrow S(-19) \oplus S(-18) \oplus S(-17) \rightarrow S^{3}(-11) \rightarrow S,$$
so $\mathcal{L}_{1}$ is $3$-syzygy with exponents $(6,7,8)$ and ${\rm mdr}(\mathcal{L}_{1})=6$.

The second arrangement $\mathcal{L}_{2}$ first appeared as $\mathcal{C}_{2}$ in \cite{BokPok}. The defining equation of $\mathcal{L}_{2}$ is:
\begin{multline*}
Q_{2}(x,y,z) = xy(x-y)(x+y-z)(x-z)(y-z)\bigg(x+(1-\sqrt{2})z\bigg)\bigg(y +(\sqrt{2}-2)z\bigg) \\
\bigg(x+\sqrt{2}y+(1-\sqrt{2})z\bigg)\bigg(x -\sqrt{2}z + \frac{\sqrt{2}}{2}y \bigg)\bigg( x + (\sqrt{2}+1)y - \sqrt{2}z\bigg)\bigg(x + (\sqrt{2}-1)y+(2-2\sqrt{2})z\bigg).
\end{multline*}
Using \verb}SINGULAR}, we can compute the minimal free resolution of the Milnor algebra of $\mathcal{L}_{2}$ obtaining
$$0 \rightarrow S^{2}(-20) \rightarrow S(-19) \oplus S^{3}(-18)  \rightarrow S^{3}(-11) \rightarrow S,$$
so $\mathcal{L}_{2}$ is $4$-syzygy with exponents $(7,7,7,8)$ and ${\rm mdr}(\mathcal{L}_{2})=7$.
\end{proof}

For the completeness of our presentation, we present geometric realizations of the arrangements $\mathcal{L}_{1}, \mathcal{L}_{2}$ below in Figure~\ref{movable} and~\ref{ex1}.

\begin{figure}[ht]
\centering
\hskip 0.5 cm
\definecolor{uuuuuu}{rgb}{0.26666666666666666,0.26666666666666666,0.26666666666666666}
\begin{tikzpicture}[line cap=round,line join=round,>=triangle 45,x=1.0cm,y=1.0cm,scale=0.45]
\clip(-11.030042187083774,-4.463352965486819) rectangle (10.538621035893197,6.347346204953949);
\draw [domain=-11.030042187083774:10.538621035893197] plot(\x,{(--10.490381056766578--4.330127018922194*\x)/2.5});
\draw [domain=-11.030042187083774:10.538621035893197] plot(\x,{(--6.160254037844389-4.330127018922194*\x)/2.5});
\draw [domain=-11.030042187083774:10.538621035893197] plot(\x,{(--5.-0.*\x)/-5.});
\draw (-0.5,-4.463352965486819) -- (-0.5,6.347346204953949);
\draw [domain=-11.030042187083774:10.538621035893197] plot(\x,{(-3.169872981077803-2.5*\x)/-4.330127018922194});
\draw [domain=-11.030042187083774:10.538621035893197] plot(\x,{(--0.669872981077809-2.5*\x)/4.330127018922194});
\draw [domain=-11.030042187083774:10.538621035893197] plot(\x,{(-6.505652582187747-4.330127018922194*\x)/-2.5});
\draw [domain=-11.030042187083774:10.538621035893197] plot(\x,{(-2.175525563265561--4.330127018922194*\x)/-2.5});
\draw [domain=-11.030042187083774:10.538621035893197] plot(\x,{(--2.331118627451304-0.*\x)/0.9202336229782615});
\draw [domain=-11.030042187083774:10.538621035893197] plot(\x,{(-0.45469932893417664-0.*\x)/2.239299131065215});
\draw [domain=-11.030042187083774:10.538621035893197] plot(\x,{(--5.854893586094159-3.533181324006426*\x)/-2.0398831885108697});
\draw [domain=-11.030042187083774:10.538621035893197] plot(\x,{(-9.388074910100586-3.533181324006429*\x)/2.0398831885108697});
\begin{scriptsize}
\draw[color=black] (0.591459421140065,5.978200379621825) node {$\ell_{3}$};
\draw[color=black] (-1.5970479719003856,5.978200379621825) node {$\ell_{11}$};
\draw[color=black] (-10.179688410872274,-0.5873217994995183) node {$\ell_{1}$};
\draw[color=black] (-0.14683222952418334,5.978200379621825) node {$\ell_{12}$};
\draw[color=black] (-8.386694402116241,-3.43501816634733) node {$\ell_{2}$};
\draw[color=black] (-10.258791087729156,5.701341010622732) node {$\ell_{10}$};
\draw[color=black] (2.2394318556584767,5.978200379621825) node {$\ell_{9}$};
\draw[color=black] (-3.37685820118027,5.991384159097972) node {$\ell_{5}$};
\draw[color=black] (-10.192872190348421,2.3394772442051766) node {$\ell_{7}$};
\draw[color=black] (-10.179688410872274,0.23007252802161265) node {$\ell_{6}$};
\draw[color=black] (4.46749058712737,5.991384159097972) node {$\ell_{4}$};
\draw[color=black] (-5.7499385068867825,5.978200379621825) node {$\ell_{8}$};
\end{scriptsize}
\end{tikzpicture}
\caption{Projectivized picture of the arrangement $\mathcal{L}_{1}$.}
\label{movable}
\end{figure}

\begin{figure}[ht]
	\centering
	\begin{tikzpicture}[line cap=round,line join=round,>=triangle 45,x=1.0cm,y=1.0cm,scale=0.45]
	\clip(-11.08909649849381,-5.311877764308603) rectangle (13.712920118917216,7.1194508581101585);
	\draw (-1.,-5.311877764308603) -- (-1.,7.1194508581101585);
	\draw (2.,-5.311877764308603) -- (2.,7.1194508581101585);
	\draw [domain=-11.08909649849381:13.712920118917216] plot(\x,{(-3.-0.*\x)/3.});
	\draw [domain=-11.08909649849381:13.712920118917216] plot(\x,{(--6.-0.*\x)/3.});
	\draw [domain=-11.08909649849381:13.712920118917216] plot(\x,{(-0.--3.*\x)/3.});
	\draw [domain=-11.08909649849381:13.712920118917216] plot(\x,{(--3.-3.*\x)/3.});
	\draw (0.24246561506204212,-5.311877764308603) -- (0.24246561506204212,7.1194508581101585);
	\draw [domain=-11.08909649849381:13.712920118917216] plot(\x,{(--2.272603154813874-0.*\x)/3.});
	\draw [domain=-11.08909649849381:13.712920118917216] plot(\x,{(--1.3313927293012882-1.242465615062042*\x)/0.5150687698759159});
	\draw [domain=-11.08909649849381:13.712920118917216] plot(\x,{(--0.42614165563666995-0.5150687698759159*\x)/1.2424656150620421});
	\draw [domain=-11.08909649849381:13.712920118917216] plot(\x,{(--10.75176620366536-4.239581524425743*\x)/3.});
	\draw [domain=-11.08909649849381:13.712920118917216] plot(\x,{(-3.512184679239616-3.*\x)/4.239581524425742});
	\begin{scriptsize}
	\draw[color=black] (-0.7119691301454923,6.270482074140097) node {$\ell_{3}$};
	\draw[color=black] (1.6226950257721715,6.2856422309967055) node {$\ell_{4}$};
	\draw[color=black] (-10.18706716552562,-0.5515885113336136) node {$\ell_{7}$};
	\draw[color=black] (-10.156746851812406,2.4046420757049938) node {$\ell_{1}$};
	\draw[color=black] (-5.1690552459883055,-4.114225372636551) node {$\ell_{5}$};
	\draw[color=black] (-5.229695873414738,6.770767250408169) node {$\ell_{2}$};
	\draw[color=black] (0.5311637320963806,6.270482074140097) node {$\ell_{9}$};
	\draw[color=black] (-10.156746851812406,1.1160287428932933) node {$\ell_{6}$};
	\draw[color=black] (-1.40933634554947,6.8162477209779935) node {$\ell_{8}$};
	\draw[color=black] (-10.12642653809919,4.284501525924417) node {$\ell_{11}$};
	\draw[color=black] (-2.561508266651694,6.361443015279747) node {$\ell_{12}$};
	\draw[color=black] (-10.12642653809919,5.936958623294716) node {$\ell_{10}$};
	\end{scriptsize}
	\end{tikzpicture}
	\caption{Projectivized picture of the arrangement $\mathcal{L}_{2}$.}
	\label{ex1}
\end{figure}
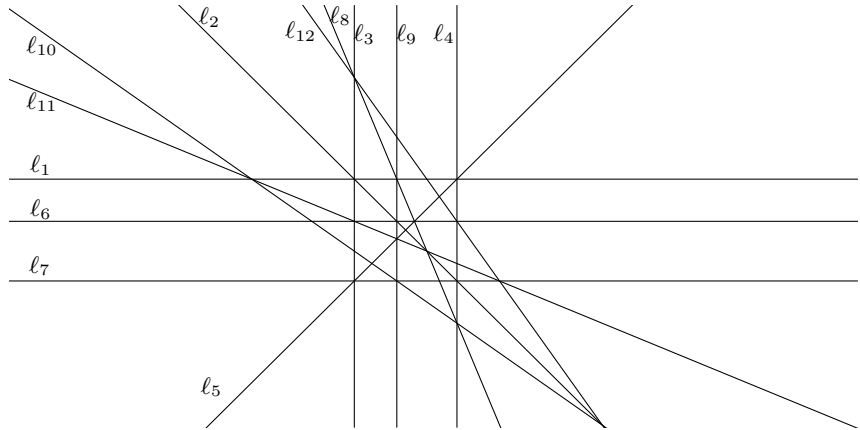

Regarding the aforementioned Zacharias arrangement, it is natural to wonder whether the realization space of that arrangement is non-trivial due to its very symmetric presentation. We have the following result.
\begin{proposition}
The moduli space of the Zacharias arrangement is one-dimensional.
\end{proposition}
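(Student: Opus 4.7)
The strategy is to compute the dimension of $\mathcal{R}(M;\mathbb{C})$ directly, where $M$ denotes the matroid of the Zacharias arrangement $\mathcal{L}_{1}$. A naive count gives expected dimension $2 \cdot 12 - 8 - 19 = -3$, corresponding to $24$ parameters for twelve lines in $\mathbb{P}^{2}_{\mathbb{C}}$, minus the $8$-dimensional action of $\mathrm{PGL}(3,\mathbb{C})$, minus one linear dependence condition per triple point. Since the arrangement $\mathcal{L}_{1}$ does realize $M$, at least four of the triple-point conditions must be redundant, and the task is to show that exactly four are.

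First I would use $\mathrm{PGL}(3,\mathbb{C})$ to fix four lines of $\mathcal{L}_{1}$ in general position into a standard form. The factorization of $Q_{1}$ reveals a convenient parallel-class structure in the affine chart $z\neq 0$: three horizontal lines, one vertical, three of slope $1$, three of slope $-1$, and two of slope $\pm 1/3$. After normalizing the projective frame --- for instance by fixing three of the pencil points at infinity together with one further scaling --- the remaining eight lines are encoded by a small collection of scalar parameters. Substituting into the nineteen triple-point incidence polynomials and computing a Gr\"obner basis of the resulting ideal in \texttt{OSCAR} then returns the Krull dimension of $\mathcal{R}(M;\mathbb{C})$, which we expect to equal $1$.

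To certify the dimension from below and exhibit an explicit tangent direction, I would replace a single numerical coefficient in $Q_{1}$ --- for instance the coefficient of $z$ in the line $y+5z=0$ --- by a parameter $t$, and solve the remaining triple-point incidences for the other coefficients as rational functions of $t$. This produces a rational one-parameter family of realizations of $M$ passing through $\mathcal{L}_{1}$ at $t=5$, giving $\dim \mathcal{R}(M;\mathbb{C}) \ge 1$. Projective inequivalence within this family can be checked via a nontrivial cross-ratio of four collinear triple points, ensuring the family is not collapsed by the $\mathrm{PGL}(3,\mathbb{C})$-action. The matching upper bound comes from the Gr\"obner basis computation above.

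The main obstacle is verifying that exactly four of the triple-point conditions are redundant --- neither more (which would yield higher-dimensional moduli) nor fewer (which would force $\mathcal{R}(M;\mathbb{C})$ to be empty near $\mathcal{L}_{1}$). This redundancy reflects a classical projective incidence identity hidden in the Zacharias configuration, in the same spirit as the cross-ratio family parametrizing the B\"or\"oczky arrangement. Concretely, it reduces to computing that the Jacobian of the incidence system at the point corresponding to $\mathcal{L}_{1}$ has rank exactly $15$ out of $19$; combined with the explicit deformation constructed above, this proves that the component of $\mathcal{R}(M;\mathbb{C})$ containing $\mathcal{L}_{1}$ is smooth and one-dimensional.
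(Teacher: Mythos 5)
Your plan is essentially the paper's proof: after normalizing a projective frame, the realization space is computed symbolically in \texttt{OSCAR} (via the procedure of Corey--K\"uhne--Schr\"oter cited as \cite{CKS}), and the paper's output is an explicit $3\times 12$ matrix of normal vectors depending rationally on a single parameter $e$ avoiding finitely many values, which simultaneously gives the lower and upper bounds on the dimension that you propose to certify separately via an explicit deformation and a Gr\"obner/Jacobian computation. The extra steps in your outline (the redundancy count, the Jacobian rank, the cross-ratio check) are harmless but unnecessary once the explicit one-parameter rational parametrization is in hand.
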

\begin{proof}
Using the \texttt{OSCAR} procedure presented in \cite{CKS}, we can compute the realization space $\mathcal{R}(\mathcal{L}_{1}; \mathbb{R})$. The columns of the matrix below correspond to normal vectors of arrangements whose interaction lattice is isomorphic to that of $\mathcal{L}_1$, as we range over values for $e \in \mathbb{R} \setminus \{-2, -1, -\frac{1}{2}, 0, 1\}$. 
\begin{equation*}
    \setcounter{MaxMatrixCols}{12}
    \begin{bmatrix}
1 & 1 & 1 & 0 & 0 & e+1 & 1   &    0 & e+1 &   e+1 & e^2+2e+1  &  e^2 + 2e+1 \\
0 & 1 & 1 & 1 & 0 & e   & 0   &    1 &   e &    -e & e^2       &  e^{2}      \\
0 & 0 & 1 & 0 & 1 & e+1 & e   & -e-1 &   e & e(e+1)& e(e+1)    & 2e(e+1)
\end{bmatrix}
\end{equation*}
Thus $\mathcal{R}(\mathcal{L}_{1}; \mathbb{R})$ is one-dimensional.
\end{proof}
In order to finish our discussion in this section we look at the realization space of $\mathcal{L}_{2}$. Using \texttt{OSCAR} we get the following observation.

\begin{proposition}
The moduli space of the line arrangement $\mathcal{L}_{2}$ is zero-dimensional.
\end{proposition}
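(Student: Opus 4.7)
The plan is to follow exactly the same \texttt{OSCAR} workflow as in the preceding proposition for $\mathcal{L}_{1}$, now applied to the incidence data coming from the $\mathcal{C}_{2}$-arrangement described in Figure~\ref{ex1}. Concretely, I would first read off, from the defining equation $Q_{2}(x,y,z)$ of $\mathcal{L}_{2}$ and its picture, the complete list of $19$ triples $\{i,j,k\}\subset\binom{[12]}{3}$ of indices of lines that are required to concur, i.e.\ the non-bases of rank two in the underlying rank-$3$ matroid $M_{2}$. These are the only conditions needed, since the $9$ double points correspond to the generic intersections and impose no extra equations.

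Next, using the implementation from \cite{CKS}, I would encode $\mathcal{L}_{2}$ as a $3\times 12$ matrix of homogeneous coordinates of the lines, normalize by the $\mathrm{PGL}(3)$-action by sending a chosen projective frame of four lines in general position to the standard frame, and parametrize the remaining eight columns by free variables. The triple-point conditions translate into the vanishing of all $3\times 3$ minors indexed by the elements of the $19$ prescribed triples, together with the inequations that all other $3\times 3$ minors are nonzero (so that the lines remain distinct and no unintended incidences occur). This produces the ideal $I\subset \mathbb{Q}[e_{1},\ldots,e_{r}]$ whose vanishing locus, minus the forbidden coincidences, is $\mathcal{R}(\mathcal{L}_{2};\mathbb{R})$.

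The claim then reduces to verifying in \texttt{OSCAR} that $I$ is a zero-dimensional ideal. I would do this by computing a Gröbner basis of $I$ and checking that $\dim I = 0$ (equivalently, that the quotient ring has finite $\mathbb{Q}$-dimension). The known real solution given by the $\sqrt{2}$-coordinates in $Q_{2}$ is then one of the finitely many points of the solution scheme; listing its Galois conjugates gives the full picture over $\mathbb{R}$ (and over $\mathbb{C}$).

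The only real obstacle is bookkeeping: the number of variables and equations is modest, but one has to be careful to include all $19$ concurrency relations \emph{and} all nondegeneracy inequations, since spurious extra incidences could artificially force extra triples and thereby change the dimension. Once the correct ideal is formed, the computation is routine and the output of \texttt{OSCAR} (dimension $0$) gives the claim directly, mirroring the successful application of the same procedure to $\mathcal{L}_{1}$.
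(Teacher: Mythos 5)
Your proposal follows essentially the same route as the paper: both apply the \texttt{OSCAR} realization-space procedure of \cite{CKS} to the matroid of $\mathcal{L}_{2}$ and conclude zero-dimensionality from the resulting computation (the paper's output is a one-parameter matrix normal form constrained by the quadratic $2e^{2}-4e+1=0$, whose two roots match the $\sqrt{2}$-coordinates in $Q_{2}$). Your extra care about including all $19$ concurrency minors and the nondegeneracy inequations is exactly what that algorithm automates, so the argument is correct and equivalent.
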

\begin{proof}
Using the same algorithm as above, we compute the realization space $\mathcal{R}(\mathcal{L}_{2}; \mathbb{R})$ as
\begin{equation*}
    \setcounter{MaxMatrixCols}{12}
    \begin{bmatrix}
1 & 1 & 1   & 0    & 1    &  1        &   1      &   1  & 1  &   1 & 0  &  0 \\
0 & 1 & e-1 & 1    & 1/2  &  e -1/2   &   e - 1/2&  e-1 & 1/2&   0 & 1  &  0 \\
0 & 1 & e-1 &2(e-1)& -e+2 &  e        &   e-1    &  0   &  e &   1 &  0 &  1
\end{bmatrix}
\end{equation*}
with $2e^2-4e+1=0$. This shows that the realization space is zero-dimensional.
\end{proof}
\section{Ziegler pairs}\label{sec: ziegler pairs}

Ziegler developed an approach to study the freeness property using supersolvable resolutions, which uses a specific realization of a given matroid \cite{Ziegler_solvable}. The first author showed that the space of non-free arrangements is a sublocus of the realization space \cite{BarKuh}. Hence, the geometry of a realization space could be an indicator of the existence of pairs of arrangements exhibiting non-uniform behavior related to freeness. 

We want to return to the original problem of Ziegler pairs, that is pairs of arrangements with the same intersection lattice but non-isomorphic Milnor algebras. In this section, we experimentally study arrangements associated $(3,12)$-matroids which have singular realization spaces. Recall that $n=12$ is the minimal value with the property that the realization space of the associated matroid admits singularities, see~\cite{corey2023singularmatroidrealizationspaces}. Our strategy for this section is the following.
We present explicitly constructed examples of arrangements which form Ziegler pairs, and keep track of where they lie on the realization space. Then we sum up our considerations in theorems accordingly.

\begin{example}\label{ex:realization_1}
    Let $M_1$ be the $(3,12)$-matroid whose non-bases are given by 
    \begin{gather*}
         \{1, 2, 3\},
 \{1, 4, 5\},
 \{1, 6, 10\},
 \{2, 4, 6\},
 \{2, 5, 8\},
 \{2, 9, 10\},
 \{3, 4, 7\},\\
 \{3, 5, 9\},
 \{3, 6, 11\},
 \{4, 10, 12\},
 \{5, 6, 7\},
 \{5, 11, 12\},
 \{7, 8, 11\},
 \{8, 9, 12\}.
    \end{gather*}
This matroid can be realized over the complex numbers and the resulting arrangements have the weak-combinatorics $(d;t_{2},t_{3}) = (12;24,14)$.

Using the algorithm in \texttt{OSCAR}, we compute on the realization space of $M_1$. The coordinate ring of $\mathcal{R}(M_1;\mathbb{C})$ is isomorphic to the ring $$R_1 = \frac{P_1^{-1}\mathbb{C}[x^{\pm},y^{\pm},z^{\pm}]}{I_1},$$
where $P_1$ is a multiplicative semigroup with $32$ generators, and $I_1$ is the principal ideal $$I = \langle(xy + xz - x - y - z^2 + 1)(x - y - z)\rangle.$$
Hence, $\mathcal{R}(M_1;\mathbb{C})$ has two maximal components, $C_1$ and $C_2$, corresponding to $xy + xz - x - y - z^2 + 1 = 0$ and $x - y - z = 0$, respectively. The singular locus of $\mathcal{R}(M_1;\mathbb{C})$ is given by the one dimensional subvariety at the intersection.
We checked that this subvariety is a smooth conic that is not excluded in the multiplicative semigroup $P_1$.

With respect to this parameterization, the line arrangements realizing $M_1$ have normal vectors of the form

\begin{equation*}
    \setcounter{MaxMatrixCols}{20}
    \begin{bmatrix}
    1 &1 &1 &1 &0 &0 &0 &x-1 &y+z-1  &y &1 &1\\
    1 &0 &x &0 &1 &0 &x &x^2-xz-x &xy &y &x &y\\
    1 &0 &x &1 &0 &1 &x-1 &0 &xy+xz-x &y+z-1 &z &z
\end{bmatrix}.
\end{equation*}

We sample points on the maximal components and in the singular locus, computing the corresponding Milnor algebras and their free resolutions.
On both $C_1$ and $C_2$, the arrangements have free resolutions of the associated Milnor algebras of the following form
$$0\to S^{2}(-21)\oplus S(-20)\to S^{5}(-19)\to S^{3}(-11) \to S.$$
Thus, on these two components the line arrangements have the exponents $(8,8,8,8,8)$.
In the singular locus, we see that the free resolutions of the associated Milnor algebras change, namely
$$0\to S^{3}(-21)\to S^{2}(-20)\oplus S^{2}(-19)\oplus S(-18)\to S^{3}(-11) \to S.$$
Hence, here the associated line arrangements have the exponents $(7,8,8,9,9)$.

\end{example}
Based on the above considerations, we notice that we have discovered a new Ziegler pair of a completely different nature compared with the previously known examples. This observation is summarized as follows.
\begin{theorem}
\label{ZZ}
There exists a pair of two degree $12$ line arrangements $\mathcal{L}_{1}, \mathcal{L}_{2}$ having isomorphic intersection lattices with the weak-combinatorics
$$(d,t_{2}, t_{3}) = (12;24,14)$$
such that ${\rm mdr}(\mathcal{L}_{1})=7$ and ${\rm mdr}(\mathcal{L}_{2}) = 8$. In other words, the line arrangements $\mathcal{L}_{1}, \mathcal{L}_{2}$ form a Ziegler pair.
\end{theorem}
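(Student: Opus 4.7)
The plan is to exhibit the claimed Ziegler pair by extracting it directly from the analysis of the matroid $M_1$ carried out in Example~\ref{ex:realization_1}. Since isomorphic intersection lattices correspond precisely to realizations of the same matroid, it suffices to produce two $\mathbb{C}$-realizations of $M_1$ whose associated Milnor algebras have different minimal degrees of Jacobian relations. The weak-combinatorics $(12;24,14)$ is matroidal, so it is inherited automatically by every realization of $M_1$; only the two values of $\mathrm{mdr}$ need to be separated.

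First I would fix the parameterization of $\mathcal{R}(M_1;\mathbb{C})$ given in Example~\ref{ex:realization_1}: the normal-vector matrix with three parameters $(x,y,z)$ and the defining ideal $I_1=\langle (xy+xz-x-y-z^2+1)(x-y-z)\rangle$ inside $P_1^{-1}\mathbb{C}[x^{\pm},y^{\pm},z^{\pm}]$. Because $I_1$ factors, $\mathcal{R}(M_1;\mathbb{C})$ splits into the two maximal components $C_1=\{xy+xz-x-y-z^2+1=0\}$ and $C_2=\{x-y-z=0\}$, whose intersection is a smooth conic lying in the (non-empty) locus cut out by $P_1$. I would then pick two specific triples $(x,y,z)\in\mathbb{C}^3$: one lying at a generic point of $C_2$ (say with $x-y-z=0$ but $xy+xz-x-y-z^2+1\neq 0$), giving the arrangement $\mathcal{L}_2$, and another lying on the singular conic $C_1\cap C_2$, giving the arrangement $\mathcal{L}_1$. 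Both tuples must be checked against the finite list of factors in $P_1$ to certify that they yield honest realizations of $M_1$.

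Next I would compute the two defining polynomials $Q_{\mathcal{L}_1}$ and $Q_{\mathcal{L}_2}$ by plugging these parameter values into the normal-vector matrix, and then run the minimal free resolution of the corresponding Milnor algebras in \texttt{OSCAR} (or \texttt{SINGULAR}). By the analysis recorded in Example~\ref{ex:realization_1}, the resolution on the generic point of a maximal component has the form
\begin{equation*}
0\to S^{2}(-21)\oplus S(-20)\to S^{5}(-19)\to S^{3}(-11)\to S,
\end{equation*}
so $\mathrm{mdr}(\mathcal{L}_2)=8$, while the resolution at the singular point has the form
\begin{equation*}
0\to S^{3}(-21)\to S^{2}(-20)\oplus S^{2}(-19)\oplus S(-18)\to S^{3}(-11)\to S,
\end{equation*}
giving $\mathrm{mdr}(\mathcal{L}_1)=7$. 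Combined with the combinatorial equality $L(\mathcal{L}_1)\cong L(\mathcal{L}_2)\cong L(M_1)$, this yields the desired Ziegler pair.

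The only genuinely non-routine step is verifying that the jump in $\mathrm{mdr}$ indeed coincides with the singular locus of $\mathcal{R}(M_1;\mathbb{C})$ rather than being a sporadic coincidence at a single chosen point. I would address this by sampling several points on each of $C_1$, $C_2$, and their intersection conic, confirming that the resolution type is constant on each stratum. The main obstacle is thus computational stability: the parameterization involves inverting the generators of the semigroup $P_1$, so candidate parameter values must be chosen so that all $32$ generators are non-zero, which is a finite but non-trivial check one must carry out before each resolution computation.
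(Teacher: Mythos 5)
Your proposal is correct and follows essentially the same route as the paper: the authors prove Theorem~\ref{ZZ} precisely by taking one realization of $M_1$ from a smooth point of a maximal component of $\mathcal{R}(M_1;\mathbb{C})$ (exponents $(8,8,8,8,8)$, so ${\rm mdr}=8$) and one from the singular conic $C_1\cap C_2$ (exponents $(7,8,8,9,9)$, so ${\rm mdr}=7$), exactly as you describe. Your additional remarks about checking the chosen parameters against the generators of $P_1$ and sampling several points per stratum match the verification the paper reports in Example~\ref{ex:realization_1}.
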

Now we move on to another example of line arrangements whose intersection lattice yields the same matroid, but the algebraic properties of their free resolutions differ.
\begin{example}\label{ex:almostpog}
Let $M_2$ be the matroid whose non-bases are given by 
\begin{gather*}
\{1, 2, 3\},
 \{1, 2, 4\},
 \{1, 3, 4\},
 \{1, 5, 6\},
 \{1, 5, 7\},
 \{1, 6, 7\},
 \{1, 9, 12\},\\
 \{2, 3, 4\},
 \{2, 5, 8\},
 \{2, 5, 9\},
 \{2, 7, 11\},
 \{2, 8, 9\},
 \{3, 5, 10\},
 \{3, 6, 8\},\\
 \{4, 6, 9\},
 \{4, 7, 8\},
 \{4, 11, 12\},
 \{5, 6, 7\},
 \{5, 8, 9\},
 \{6, 10, 12\},
 \{8, 10, 11\}.
\end{gather*}
The matroid $M_2$ has realizations as complex line arrangement with the following weak-combinatorics $(d;t_{2},t_{3},t_{4}) = (12;21,9,3)$.
Using \texttt{OSCAR} we compute the realization space of~$M_2$ as $$\mathcal{R}(M_2;\mathbb{C})\cong\text{Spec}\left(\frac{P_2^{-1}\mathbb{C}[x^{\pm 1},y^{\pm1}]}{I_2}\right),$$
where $P_2$ is a multiplicative semigroup with $45$ generators, and 
    $$I_2 = \left\langle\left(x-\frac{1-i\sqrt{3}}{2}\right)\left(x-\frac{1+i\sqrt{3}}{2}\right)(xy+x-y+1)\right\rangle.$$

Therefore, $\mathcal{R}(M_2;\mathbb{C})$ has three  (one-dimensional) connected components, and a zero-dimensional singular locus where these curves intersect.
We checked that this singular locus is not excluded by the multiplicative semigroup $P_2$.
With respect to this parameterization, the normal vectors for arrangements realizing $M_2$ have the form 
\begin{equation*}
\setcounter{MaxMatrixCols}{20}
       \begin{bmatrix}
           1&1&1&0&1&1&0&0&1&\theta&1&1\\
           1&x&0&1&1&1&0&1&x+1&xy+1&x&\theta\\
           0&0&0&0&1&\gamma&1&\gamma&\gamma&xy+1&y&y\\
       \end{bmatrix},
\end{equation*}
where $\gamma = \frac{x^{2}y+x^{2}+xy-x+y+1}{2x^{2}y}$ and $\theta = -x^{2}y+xy-y+1$.

Again we sample points on the connected components and study the associated Milnor algebras. On the component corresponding to $\langle xy+x-y+1\rangle$, when $x\neq \frac{1\pm i\sqrt{3}}{2}$, the minimal free resolutions of the associated Milnor algebras have the following form
\begin{equation}\label{ex2:res1}
    0\to S^{3}(-20)\to S^{3}(-19)\oplus S^{2} (-18)\to S^{3}(-11) \to S,
\end{equation}
so the corresponding line arrangement is $5$-syzygy with exponents $(7,7,8,8,8)$. However, when $x = \frac{1\pm i\sqrt{3}}{2}$, we obtain free resolutions of the form 
\begin{equation}\label{ex2:res2}
    0\to S(-20)\to S^{3}(-18) \to S^{3}(-11) \to S.
\end{equation}
Thus, these are examples of $3$-syzygy line arrangements with exponents $(7,7,7)$.

\end{example}

Example \ref{ex:almostpog} gives us the following result.
\begin{theorem}
 There exists a pair of two degree $12$ line arrangements $\mathcal{L}_{1}, \mathcal{L}_{2}$ having isomorphic intersection lattices with the weak-combinatorics
 $(d;t_{2},t_{3},t_{4}) = (12;21,9,3)$
such that the minimal free resolutions of the associated Milnor algebras are different. In other words, the arrangements $\mathcal{L}_{1}, \mathcal{L}_{2}$ form a Ziegler pair. 
\end{theorem}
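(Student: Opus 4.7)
The plan is to exhibit a strong Ziegler pair directly from the matroid $M_2$ of Example~\ref{ex:almostpog}, exploiting the reducibility of its realization space $\mathcal{R}(M_2;\mathbb{C})$. The guiding idea is that two realizations of the same matroid, coming from different components of the realization space, can produce Milnor algebras with distinct minimal free resolutions. This is precisely the kind of phenomenon suggested by the discovery in~\cite{corey2023singularmatroidrealizationspaces} of matroids on $12$ elements with singular realization spaces, and it is the conceptual novelty over earlier Ziegler-pair constructions based on smooth moduli.

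First I would verify that $M_2$, specified by the listed $21$ non-bases, is a bona fide $(3,12)$-matroid, which amounts to checking the basis exchange axiom on the $\binom{12}{3}-21$ bases, and then confirm that every $\mathbb{C}$-realization yields a line arrangement with the weak-combinatorics $(12;21,9,3)$ by inspecting which triples of non-bases correspond to $3$-fold and $4$-fold points. Second, I would compute $\mathcal{R}(M_2;\mathbb{C})$ using the \texttt{OSCAR} procedure of~\cite{CKS}. The desired output is a presentation of the coordinate ring as a localization of $\mathbb{C}[x^{\pm 1}, y^{\pm 1}]$ modulo a principal ideal whose generator factors into three pieces: two linear factors $x-(1\pm i\sqrt{3})/2$ and the absolutely irreducible quadric $xy+x-y+1$. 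This gives three one-dimensional components meeting in a zero-dimensional singular locus, together with an explicit matrix of normal vectors depending on $(x,y)$.

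With this parameterization in hand, the final step is to sample representative realizations on two distinct components, form the defining polynomial as a product of twelve linear forms, and feed it to \texttt{SINGULAR} (or \texttt{OSCAR}) to compute the minimal graded free resolution of the Milnor algebra. I would expect the generic point of $V(xy+x-y+1)$ to yield a $5$-syzygy arrangement with exponents $(7,7,8,8,8)$ and resolution of shape~\eqref{ex2:res1}, whereas the isolated points $x=(1\pm i\sqrt{3})/2$ yield a $3$-syzygy arrangement with exponents $(7,7,7)$ and resolution of shape~\eqref{ex2:res2}; the two Betti tables then witness a strong Ziegler pair in the sense of Definition~\ref{def:strong zieglar pair}. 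The main obstacle I anticipate is arranging the parameter choices so that the representatives lie genuinely on a single component, respect the $45$ inequalities encoded by the semigroup $P_2$ (so that the $12$ lines remain distinct and no unintended incidences appear), and avoid the singular locus where the components meet, so that the computed resolutions reflect the generic behavior of each component rather than a further degeneration.
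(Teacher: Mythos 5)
Your proposal follows essentially the same route as the paper: the theorem is proved by Example~\ref{ex:almostpog}, i.e., by computing the realization space of $M_2$ with the \texttt{OSCAR} procedure, sampling realizations, and comparing the computed Betti tables~\eqref{ex2:res1} and~\eqref{ex2:res2}. The only slight discrepancy is that you speak of sampling on ``two distinct components,'' whereas in the paper both resolution shapes already occur along the component $V(xy+x-y+1)$ --- shape~\eqref{ex2:res1} at generic points and shape~\eqref{ex2:res2} at the special points with $x=\frac{1\pm i\sqrt{3}}{2}$ where it meets the linear components --- but this does not affect the conclusion.
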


\begin{remark}
	One possible reason for the phenomena exhibited in the Examples~\ref{ex:realization_1} and~\ref{ex:almostpog} is that in both cases there are \emph{hidden collinearities} between the intersection points.
	Thus, there are collinear points in the realizations in the singular locus (in  Example~\ref{ex:realization_1}) or the linear components (in Example~\ref{ex:almostpog}) which are not collinear in other realizations.
\end{remark}
\section*{Conflict of Interests}
We declare that there is no conflict of interest regarding the publication of this paper.
\section*{Data Availability Statement}
We do not analyze or generate any datasets, because this work proceeds within a theoretical and mathematical approach. 
\printbibliography
\end{document}